\theoremstyle{plain}
\newtheorem{Th}{Theorem}
\newtheorem{Cor}{Corollary}
\newtheorem{Pro}{Proposition}
\newtheorem{Lem}{Lemma}
\theoremstyle{definition}
\newtheorem{Def}{Definition}
\theoremstyle{remark}
\newtheorem{Rk}{Remark}
\newcommand{\w}[1]{\widetilde{#1}}
\begin{document}

\title[Paracontact metric structures on $T_1 M$] {Paracontact metric structures on \\ the unit tangent sphere bundle}

\author[G.~Calvaruso]{Giovanni Calvaruso}
\address{Universit\`a del Salento\\ Dipartimento di Matematica e Fisica ``E.~De Giorgi'' \\  Provinciale Lecce--Arnesano\\ 73100 Lecce\\
Italy} \email{giovanni.calvaruso@unisalento.it}

\author[V. Mart\'in-Molina]{Ver\'onica Mart\'in-Molina}
\address{Centro Universitario de la Defensa-I.U.M.A., Academia General Militar\\
Carretera de Huesca, s/n -- Zaragoza 50090, SPAIN.} \email{vmartin@unizar.es}
\thanks{The first author is partially supported by funds of the University of Salento and MURST (PRIN 2011).}
\thanks{The second author is partially supported by the grant MTM2011-22621 (MEC, Spain), the PAI group FQM-327 (Junta de Andaluc\'{\i}a, Spain) and the E15 Grupo Consolidado Geometr\'ia (Gobierno de Arag\'on, Spain)}

\begin{abstract}
Starting from $g$-natural pseudo-Riemannian metrics of suitable signature on the unit tangent sphere bundle $T_1 M$ of a Riemannian manifold $(M,\langle,\rangle)$, we construct a family of paracontact metric structures. We prove that this class of paracontact metric structures is invariant under $\mathcal D$-homothetic deformations, and classify paraSasakian and paracontact $(\kappa,\mu)$-spaces inside this class. We also present a way to build paracontact $(\kappa,\mu)$-spaces from corresponding contact metric structures on $T_1 M$.
\end{abstract}

\keywords{Unit tangent sphere bundle, g-natural metrics, paracontact metric structures, $(\kappa,\mu)$-spaces.}

\subjclass[2010]{53D10, 53C50, 53C15, 53C25.}

\maketitle


\numberwithin{equation}{section}
\renewcommand{\theequation}{\thesection.\arabic{equation}}
%

\date{}

\section{Introduction}\label{sec-intro}

In Riemannian settings, contact structures are a natural odd-dimensional analogue to complex structures. In the same way, paracontact metric structures, introduced in \cite{kaneyuki}, are the odd-dimensional counterpart to paraHermitian structures.

The study of paracontact metric manifolds focused mainly on the special case of paraSasakian manifolds, until a systematic study of paracontact metric manifolds was undertaken in recent years. The starting point was the work by S.~Zamkovoy \cite{zamkovoy}, where the Levi-Civita connection and curvature of a paracontact metric manifold were described. Paracontact $(\kappa,\mu)$-spaces were studied in \cite{CKM}. Conformal paracontact curvature was investigated in \cite{IVZ}. The first author \cite{C1} classified three-dimensional homogeneous paracontact
metric manifolds, and paracontact metric manifolds whose characteristic vector field is harmonic were studied in \cite{CP3}.

A canonical example of contact metric manifold is given by the unit tangent sphere bundle $T_1 M$, equipped with a suitable contact metric structure, having as associated metric a Riemannian metric homothetic to the Sasaki metric on $T_1 M$, and the geodesic flow vector field as the Reeb vector field \cite{blair-book}. This fact makes it natural to look at the unit tangent sphere bundle to build examples of paracontact metric structures. First results in this direction have been obtained in \cite{C}, only for the class of paracontact $(\kappa,\mu)$-spaces, considering two suitable deformations of the standard contact metric structure on the unit tangent sphere bundle over a Riemannian manifold of constant sectional curvature.

Because of the rigidity of the Sasaki metric, $g$-natural metrics have been intensively studied in recent years, providing interesting geometric behaviours under several different points of view.
These metrics were introduced first by O.~Kowalski and M.~Sekizawa \cite{KSe}, who classified second order natural transformations of Riemannian metrics on manifolds to metrics on tangent bundles.
The metrics induced on the unit tangent sphere bundle by the corresponding $g$-natural metrics on the tangent bundle $TM$ are called \emph{$g$-natural metrics on} $T_1 M$. The Sasaki metric 
is only one possible choice inside this very large family of metrics. 

In this paper we will introduce and study \emph{$g$-natural paracontact metric structures} on $T_1 M$, proving that the unit tangent sphere bundle $T_1 M$ on a Riemannian manifold $(M,\langle,\rangle)$ carries a three-parameter family of paracontact metric structures, having a pseudo-Riemannian $g$-natural metric as associated metric. We will then investigate several aspects of the paracontact metric geometry of these structures.

The paper is organised in the following way. Some basic information on $g$-natural metrics on the tangent and the unit tangent sphere bundle are provided in Section~\ref{sec-prelim}, paying particular attention to nondegeneracy and signature of these metrics. In Section~\ref{sec-para}, after reporting the needed definitions and properties about paracontact metric geometry, we will describe $g$-natural paracontact metric structures, prove their invariance by $\mathcal D$-homothetic deformations and classify paraSasakian structures and those whose tensor $\widetilde h$ satisfies $\widetilde h ^2 =0 \neq \widetilde h$. The latter case does not have any contact Riemannian analogue, due to the diagonalisability of $\widetilde h$ in the Riemannian case.

Sections~\ref{sec-kappamu} and \ref{sec-deform} will be devoted to paracontact $(\kappa,\mu)$-structures. In particular, in Section~4 we will characterize $g$-natural paracontact $(\kappa,\mu)$-spaces by means of properties on the base manifold. In Section~5 we will show that applying the deformations introduced in \cite{C} to $g$-natural contact $(\kappa,\mu)$-spaces gives us paracontact $(\kappa,\mu)$-spaces which are again $g$-natural. We end this paper with Section~\ref{sec-final}, where we consider homogeneity and harmonicity properties of $g$-natural paracontact metric structures.

\section{Preliminaries}\label{sec-prelim}

In this section we will include some needed information on $g$-natural metrics on the tangent bundle and unit tangent sphere bundle. Special  attention will be paid to the signature of these metrics.

\subsection{$g$-natural metrics on the tangent bundle}

Let $(M,\langle ,\rangle)$ be an $(n+1)$-dimensional Riemannian manifold (with $n\geq 1$) and denote by $\nabla$ its Levi-Civita connection. Then, the tangent space $(TM)_{(x,u)}$ of the tangent bundle $TM$ at a point $(x,u)$   splits as
\[
  (TM)_{(x,u)}= \mathcal{H} _{(x,u)}\oplus \mathcal{V} _{(x,u)},
\]
where $\mathcal H$ and $\mathcal V$ are the horizontal and vertical spaces with respect to $\nabla$.

Indeed, given a vector $X\in M_x$, there exists a unique vector $X^h  \in
\mathcal{H}_{(x,u)}$ (the \emph{horizontal lift} of $X$ to $(x,u)\in TM$),
such that $\pi_* X^h =X$, where $\pi:TM \rightarrow M$ is the
natural projection. The \emph{vertical lift} of a vector $X\in M_x$
to $(x,u)\in TM$ is a vector $X^v  \in \mathcal{V}_{(x,u)}$ such that $X^v
(df) =Xf$, for all functions $f$ on $M$. Here we consider $1$-forms
$df$ on $M$ as functions on $TM$ (i.e., $(df)(x,u)=uf$).

The map $X \to X^h$ is an isomorphism between the vector spaces $M_x$ and
$\mathcal{H}_{(x,u)}$. Similarly, the map $X \to X^v$ is an isomorphism
between $M_x$ and $\mathcal{V}_{(x,u)}$. Each tangent vector $Z \in
(TM)_{(x,u)}$ can be written in the form $ Z =X^h + Y^v$,
where $X,Y \in M_x$ are uniquely determined vectors. With respect to local coordinates $\{\partial/\partial x_{i}\}$ on $M$, the \emph{geodesic flow vector field} on $TM$ is uniquely determined by $u^h _{(x,u)}=\sum _i u^i \left(\partial/\partial x_{i}\right)^h _{(x,u)}$, for any point $x\in M$ and $u \in TM_x$.

We refer to the paper \cite{AS2} for a description of the class of $g$-natural metrics
 on the tangent bundle of a Riemannian manifold $(M,\langle, \rangle)$. In particular, we report
 the following characterisation.

\begin{Pro}[\cite{AS2}]\label{$g$-nat}
Let $(M,\langle, \rangle)$ be a Riemannian manifold and $G$ be a $g$-natural metric
on $TM$. Then there are six smooth functions $\alpha_i$,
$\beta_i:\mathbb{R}^+ \rightarrow \mathbb{R}$, $i=1,2,3$, such that
for every $u$, $X$, $Y \in M_x$, we have
\begin{equation}\label{exp-$g$-nat}
 \left\lbrace
\begin{array}{l}
G_{(x,u)}(X^h,Y^h) =  (\alpha_1+ \alpha_3)(r^2) \langle X,Y\rangle
                     + (\beta_1+ \beta_3)(r^2) \langle X_1,u \rangle \langle X_2,u \rangle,\\
G_{(x,u)}(X^h,Y^v) = G_{(x,u)}(X^v,Y^h)= \alpha_2 (r^2) \langle X,Y \rangle
                       +  \beta_2 (r^2) \langle X,u \rangle  \langle Y,u \rangle, \\
G_{(x,u)}(X^v,Y^v) =  \alpha_1 (r^2) \langle X,Y \rangle
                       + \beta_1 (r^2) \langle X,u \rangle \langle Y,u \rangle,
\end{array}
\arraycolsep5pt \right.
\end{equation}
where $r^2 =\langle u,u \rangle$. 
\end{Pro}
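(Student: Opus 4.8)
The plan is to exploit the defining property of a $g$-natural metric, namely its invariance under the lifts of (local) isometries of the base, and to reduce the whole statement to a pointwise question in invariant theory. Fix a point $x\in M$ and identify the fibre $M_x$ with $\R^{n+1}$ by choosing orthonormal coordinates, so that the isotropy action of isometries fixing $x$ becomes the standard action of the orthogonal group on $\R^{n+1}$. The splitting $(TM)_{(x,u)}=\mathcal H_{(x,u)}\oplus\mathcal V_{(x,u)}$ lets me encode $G_{(x,u)}$ through three bilinear forms on $M_x$,
\[
b^H_u(X,Y)=G_{(x,u)}(X^h,Y^h),\quad b^M_u(X,Y)=G_{(x,u)}(X^h,Y^v),\quad b^V_u(X,Y)=G_{(x,u)}(X^v,Y^v),
\]
since every vector tangent to $TM$ at $(x,u)$ decomposes uniquely as a horizontal lift plus a vertical lift.

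First I would record how lifts transform under an isometry $\phi$ fixing $x$ with differential $A=\phi_{*x}$ in the orthogonal group: the induced diffeomorphism of $TM$ sends $(x,u)$ to $(x,Au)$ and carries $X^h_{(x,u)}$ to $(AX)^h_{(x,Au)}$ and $X^v_{(x,u)}$ to $(AX)^v_{(x,Au)}$, because the horizontal and vertical distributions are built naturally from $\nabla$, which $\phi$ preserves. Naturality of $G$ then translates into the equivariance relations $b^\bullet_{Au}(AX,AY)=b^\bullet_u(X,Y)$ for each of the three forms and every $A$. This is exactly the setting of the first fundamental theorem for the orthogonal group: the only invariant bilinear concomitants of a single vector $u$ are $\langle X,Y\rangle$ and $\langle X,u\rangle\langle Y,u\rangle$. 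Hence each $b^\bullet_u$ is a linear combination of these two, the coefficients being invariant functions of $u$ and therefore, by smoothness together with the classification of smooth invariant functions, smooth functions of $r^2=\langle u,u\rangle$.

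Applying this to $b^V$ and $b^M$ yields the functions $\alpha_1,\beta_1$ and $\alpha_2,\beta_2$ in the stated form, and applying it to $b^H$ yields two further functions; writing the latter as $\alpha_1+\alpha_3$ and $\beta_1+\beta_3$ is merely a convenient normalisation that defines $\alpha_3,\beta_3$ by subtraction. The same invariant-theoretic list also forces $b^M$ to be symmetric in $X,Y$, there being no invariant skew bilinear form in a single vector, which matches $G(X^h,Y^v)=G(X^v,Y^h)$ as it follows from the symmetry of $G$. The step I expect to be the main obstacle is the passage from the abstract notion of a second-order natural operator to the pointwise equivariance used above: making this rigorous requires the reduction theorems for natural operators, so that naturality becomes equivalent to equivariance under the relevant jet/orthogonal group, and some care to ensure the coefficients descend to smooth functions of $r^2$ rather than merely of $|u|$, which is precisely where Schwarz-type results on smooth invariants enter.
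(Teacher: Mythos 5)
The paper does not prove this Proposition: it is quoted verbatim from the reference [AS2] (Abbassi--Sarih), which in turn rests on the Kowalski--Sekizawa classification of natural transformations of metrics to metrics on tangent bundles. Your sketch follows essentially the same route as that published proof: reduce to pointwise data via the splitting $(TM)_{(x,u)}=\mathcal H_{(x,u)}\oplus\mathcal V_{(x,u)}$, obtain $O(n+1)$-equivariance of the three bilinear forms $b^H_u$, $b^M_u$, $b^V_u$, and invoke the first fundamental theorem for the orthogonal group to conclude that each is a combination of $\langle X,Y\rangle$ and $\langle X,u\rangle\langle Y,u\rangle$ with coefficients that are smooth functions of $r^2$. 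The invariant-theoretic step, including the observation that no skew-symmetric concomitant exists (which yields $G(X^h,Y^v)=G(X^v,Y^h)$), is correct, and writing the horizontal coefficients as $\alpha_1+\alpha_3$, $\beta_1+\beta_3$ is indeed just a normalisation.

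The one point that needs repair is your opening premise. Invariance under lifts of local isometries of a \emph{fixed} base $(M,\langle,\rangle)$ is not the definition of a $g$-natural metric and is vacuous for a generic $M$, whose isometry pseudogroup is trivial; with only that hypothesis the equivariance relations $b^\bullet_{Au}(AX,AY)=b^\bullet_u(X,Y)$ for all $A\in O(n+1)$ simply do not follow. What is actually assumed is functorial naturality of the operator $g\mapsto G$ over \emph{all} Riemannian manifolds with respect to local diffeomorphisms. The passage from that to your pointwise $O(n+1)$-equivariance is the substantive content of the theorem: one needs regularity and finite order (a Peetre-type argument), normal coordinates to parametrise the jet of $g$ at $x$ by the flat jet together with curvature and its covariant derivatives, and a homogeneity argument under dilations to show that the curvature jets cannot contribute, so that $G_{(x,u)}$ depends on $g$ only through $\langle\,,\rangle_x$ and $u$. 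You flag this as the main obstacle but leave it entirely to the reduction theorems; since it is precisely where the classification could fail (a priori, curvature terms could enter, as they do for higher-order natural transformations), the proposal as written is an accurate outline of the standard proof rather than a complete argument.
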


\begin{Rk}
Throughout the paper, we will use the following notation:
\[
\phi_i(t) =\alpha_i(t) +t \beta_i(t), \quad
\alpha(t) = \alpha_1(t) (\alpha_1+\alpha_3)(t) - \alpha_2^2(t), \quad
\phi(t) = \phi_1(t) (\phi_1 +\phi_3)(t) -\phi_2^2(t),
\]
for all $t \in \mathbb{R}^+$.
Unless otherwise stated, all real functions $\alpha_i$,
$\beta_i$, $\phi_i$, $\alpha$ and $\phi$ and their derivatives are
evaluated at $r ^2:=\langle u,u \rangle$.
\end{Rk}

In the literature, there are some well-known examples of Riemannian metrics on the
tangent bundle  which are special cases of Riemannian
$g$-natural metrics. In particular, the \emph{Sasaki metric} $g_S$ is obtained for
\[
\alpha _1 (t)=1, \quad \alpha _2 (t)= \alpha _3 (t)= \beta _1 (t) =\beta _2 (t)= \beta _3 (t)=0
\]
and we get the \emph{Cheeger-Gromoll metric} $g_{GC}$ for
\[
\alpha _2 (t)= \beta _2 (t)=0, \quad \alpha _1 (t)= \beta _1 (t)= -\beta _3 (t)= \frac{1}{1+t}, \quad \alpha _3 (t)= \frac{t}{1+t}.
\]

Since $\alpha _2=\beta _2 =0 $, it follows from \eqref{exp-$g$-nat}
that both the Sasaki and the Cheeger-Gromoll metrics are examples of Riemannian $g$-natural
metrics on $TM$ for which the horizontal and vertical distributions are
mutually orthogonal.

We will now investigate the signature  of a $g$-natural metric $G$ on $TM$. In particular, we will give the necessary and sufficient conditions for $G$ to be Riemannian.

Let $\{ e_0=\frac{1}{\langle u,u \rangle} u,e_1,\ldots,e_n \}$ be an orthonormal basis at $x \in M$ for $(M,\langle,\rangle )$. If we define $X_i=e_i^h$, $Y_i=e_i^v$, for $i=0,\ldots,n$, then we have that $G(X_i,X_j)=G(X_i,Y_j)=G(Y_i,Y_j)=0$, when $i \neq j$.
Therefore, the matrix of $G$ with respect to the basis $\{ X_0,Y_0,\ldots,X_n,Y_n\}$ at a point $(x,u)$ is block diagonal:
\[
G=
\begin{pmatrix}
\phi_1+\phi_3 & \phi_2  & 0                & 0       &\ldots &0                 &0        \\
\phi_2        & \phi_1  & 0                & 0       &\ldots &0                 &0        \\
0             & 0       &\alpha_1+\alpha_3 &\alpha_2 &       &                  &         \\
0             & 0       &\alpha_2          &\alpha_1 &       &\vdots            &\vdots   \\
\vdots        & \vdots  &                  &         &\ddots &0                 &0        \\
0             & 0       &   \ldots         & \ldots  &       &\alpha_1+\alpha_3 &\alpha_2 \\
0             & 0       &   \ldots         & \ldots  &       &\alpha_2          &\alpha_1
\end{pmatrix}.
\]
Consequently, it is easily seen that the determinant of $G$ is given by $\phi \cdot \alpha^n$ and its eigenvalues are $\phi_1+\phi_3 \pm \sqrt{\phi_3^2+4\phi_2^2}$ (each of them once) and $2\alpha_1+\alpha_3 \pm \sqrt{\alpha_3^2+4\alpha_2^2}$  (each of them $n$ times).

Thus, $G$ is non-degenerate if and only if $\alpha \phi \neq 0$. Moreover, $G$ is Riemannian if and only if $\alpha \phi \neq 0$ and $\phi_1+\phi_3 \pm \sqrt{\phi_3^2+4\phi_2^2}, \  \alpha_1+\alpha_3 \pm \sqrt{\alpha_3^2+4\alpha_2^2} >0$. Taking into account the notation introduced above, this is equivalent to
\begin{equation} \label{cond-Riem}
\alpha_1(t)>0, \quad \phi_1(t)>0, \quad \alpha(t)>0, \quad \phi(t)>0,
\end{equation}
for all $t \in \mathbb{R}^+$.

\subsection{$g$-natural metrics on $T_1 M$}

The \emph{tangent sphere bundle of radius $r>0$} over a Riemannian manifold $(M,\langle,\rangle )$ is the hypersurface $T_r
M= \{(x,u) \in TM \ | \ \langle u,u \rangle=r^2\}$. The tangent space at a point $(x,u) \in T_r M$ is given by
\[
(T_r M)_{(x,u)}=\{X^h +Y^v  \ / \ X \in M_x,
Y \in \{u\}^\perp \subset M_x\}.
\]
When $r=1$, $T_1 M$ is called \emph{the unit tangent (sphere) bundle}.

By a \emph{$g$-natural metric} on $T_r M$ we mean any metric $\widetilde{G}$, induced on $T_r M$ by a $g$-natural metric $G$ on $TM$. It follows from \eqref{exp-$g$-nat} that $\widetilde{G}$ is completely determined by the values of four real constants, namely,

\[
a := \alpha_1(r^2), \quad b := \alpha_2(r^2), \quad c := \alpha_3(r^2), \quad d := (\beta_1+\beta_3)(r^2).
\]
At any point $(x,u) \in T_r M$, the   metric $\widetilde{G}$ on $T_r M$ is completely described by
\begin{equation}\label{g-tilde}
\left\lbrace \arraycolsep1.5pt
\begin{array}{l}
\widetilde{G}_{(x,u)}(X_1^h,X_2^h)  = (a+c) \langle X_1,X_2 \rangle +d \langle X_1,u \rangle \langle X_2,u \rangle, \\
\widetilde{G}_{(x,u)}(X_1^h,Y_1^v) =\widetilde{G}_{(x,u)}(Y_1^v,X_1^h ) = b \langle X_1,Y_1 \rangle,\\
\widetilde{G}_{(x,u)}(Y_1^v,Y_2^v) = a \langle Y_1,Y_2 \rangle,
\end{array}
\right.
\end{equation}
for all $X_i, Y_i \in M_x$, $i=1,2$, with $Y_i$ orthogonal to $u$.


We will now study the signature of a $g$-natural metric  $\widetilde{G}$ on $T_r M$. Proceeding as for the metric $G$ on the tangent bundle $TM$, we start from an orthonormal basis $\{ e_0=u,e_1,\ldots,e_n \}$ for $(M,\langle,\rangle )$ on $x\in M$. Defining $X_0=e_0^h=u^h$ and $X_i=e_i^h$, $Y_i=e_i^v$, for $i=1,\ldots,n$, we have that $\widetilde{G}(X_i,X_j)=\widetilde{G}(X_i,Y_j)=\widetilde{G}(Y_i,Y_j)=0$, when $i \neq j$.
Therefore, the matrix of $\widetilde{G}$ with respect to the basis $\{ X_0,X_1,Y_1,\ldots,X_n,Y_n\}$ at a point $(x,u)$ is block diagonal:
\[
\widetilde{G}=
\begin{pmatrix}
{   a+c+d r^2 }         & 0         &0        &\ldots &0       &0        \\
0             &a+c        &b        &       &0       &0         \\
0             &b          &a        &       &0       &0        \\
\vdots        &           &         &\ddots &\vdots  &\vdots    \\
0             &   \ldots  & \ldots  &       &a+c     &b        \\
0             &   \ldots  & \ldots  &       &b       &a
\end{pmatrix},
\]
the determinant of $\widetilde G$ is $(a+c+d r^2 ) \alpha^{n}$, and its eigenvalues $a+c+r^2 d$ (only once) and $2a+c \pm \sqrt{c^2+4b^2}$ (each of them $n$ times).

Hence, $\widetilde G$ is Riemannian  if and only if {  $a+c+d r^2 >0$} and  $2a+c \pm \sqrt{c^2+4b^2} >0$, which is easily seen to be equivalent to
\begin{equation}\label{cond-Riem-T1M}
a > 0,\quad  a +c +d r^2 > 0, \quad \alpha=a(a +c)-b^2 > 0.
\end{equation}

\begin{Rk}\label{remark}
A $g$-natural metric $\widetilde{G}$ on $T_1 M$ is Riemannian if and only if \eqref{cond-Riem-T1M} holds, but this does not mean that the metric $\widetilde{G}$ is necessarily induced by a \emph{Riemannian} $g$-natural metric $G$ on $TM$. In fact, $G$ is Riemannian only under the extra condition $\phi=a(a+c+r^2 d)-b^2>0$, which is not necessary for $\widetilde{G}$ (see also \cite{AK2}). More precisely,
a Riemannian $g$-natural metric $\widetilde G$ on $T_r M$ is induced by:
\begin{itemize}
\item a Riemannian $g$-natural metric on $TM$ if and only if $a(a +c +dr^2) >b^2 $;
\item a degenerate $g$-natural metric of signature $(2n+1, 0,1)$ on $TM$ if and only if $a(a +c +dr^2) =b^2$;
\item  a pseudo-Riemannian $g$-natural metric of signature $(2n+1, 1,0)$ on $TM$ if and only if $a(a +c +dr^2) <b^2$.
\end{itemize}
\end{Rk}

Clearly, other signatures are also allowed for $g$-natural metrics on $T_r M$. In particular, if we require the space $\{u\}^\perp$ to be of neutral signature $(n,n)$ with respect to a (non-degenerate) metric $\widetilde{G}$, then we must have $(a+c+dr^2) \alpha^{n}\neq 0$, $2a+c + \sqrt{c^2+4b^2} >0$ and $2a+c - \sqrt{c^2+4b^2} <0$, {  and these}  conditions are equivalent to
\begin{equation}\label{cond-neutral-T1M}
a +c +d r^2\neq 0,   \quad \alpha=a(a +c) -b^2 < 0,
\end{equation}
where the sign of $a +c +d r^2$ will depend on the casual character of $u^h$.

In order to construct a paracontact metric structure with an associated $g$-natural metric on the unit tangent sphere bundle $T_1 M$, we will require the vector $u^h$ to be spacelike and the space $\{u\}^\perp$ to be of neutral signature, that is, by \eqref{cond-neutral-T1M},
\[
a+c+d>0  \quad \text{ and } \quad \alpha<0.
\]
Notice that, contrarily to the Riemannian case described by conditions \eqref{cond-Riem-T1M},  the above conditions do not give any restriction on the value of $a$. Indeed, even the case $a=0$ is possible, simply requiring that $\alpha=-b^2<0$, that is, $b \neq 0$.

Moreover, analogously to Remark~\ref{remark}, the above conditions do not yield any restriction over $\phi$.  
On the other hand, when we can reduce to the case $\phi >0$ (see Remark~\ref{phiprime} below), we can make use of the formulas for the Levi-Civita connection and curvature of $\widetilde G$ already obtained in \cite{AC1}, \cite{AC2}, while in the case $\phi<0$  some signs would necessarily change, and the case $\phi=0$ would need a completely different treatment, as it corresponds to a degenerate metric $G$ on $TM$.

Whenever $\phi \neq 0$, the vector field on $TM$ defined by
\[
N^G_{(x,u)} =\frac{1}{\sqrt{|(a+c+d) \phi| }}\, [-b u^h +(a+c+d) u^v],
\]
for all $(x,u) \in TM$, is unit normal to $T_1 M$ (either spacelike or timelike, depending on the sign of $\phi$), at any point of $T_1 M$.

With respect to $G$, the \lq\lq tangential lift\rq\rq \ $X^{t_G}$ of a vector field $X \in M_x$  to $(x,u) \in T_1 M$ is the  tangential projection of the vertical lift of $X$ to $(x,u)$ with respect to $N^G$:
\[
 X^{t_G} = X^v -\frac{\phi}{|\phi|} G_{(x,u)}(X^v,N^G_{(x,u)})\; N^G_{(x,u)}
   = X^v - \sqrt{\frac{|\phi|}{|a+c+d|}} \langle X,u \rangle \, N^G_{(x,u)}.
\]
If $X \in M_x$ is orthogonal to $u$, then $X^{t_G} = X^v$. Note that
if $b=0$, then $X^{t_G}$ coincides with the classical tangential
lift $X^t$ defined for the  case of the Sasaki metric. In the
general case,
\begin{equation*}\label{tang-tang}
X^{t_G}=X^t +\frac{b}{a+c+d} \langle X,u \rangle u^h.
\end{equation*}

The tangential lift to $(x,u) \in T_1 M$ of the vector $u$ is given
by $u^{t_G}=\frac{b}{a+c+d}\, u^h$, so $u^{t_G}$ is a horizontal
vector. Hence, we can write the tangent space to $T_1M$ at a point $(x,u)$ as
\begin{equation} \label{conv}
(T_1 M)_{(x,u)} =\{X^h +Y^{t_G} /X \in M_x,
 Y \in \{u\}^\perp \subset M_x\}.
\end{equation}
For this reason, the operation of tangential lift from $M_x$ to a
point $(x,u) \in T_1 M$ will be always applied only to vectors of
$M_x$ which are orthogonal to $u$.

Hence, an arbitrary $g$-natural metric $\widetilde{G}$ on $T_1 M$, induced by a $g$-natural metric $G$ on $TM$ with $\phi \neq 0$, is completely determined by
\begin{equation}\label{g-tilde2}
\left\lbrace \arraycolsep1.5pt
\begin{array}{l}
\widetilde{G}_{(x,u)}(X_1^h,X_2^h)  = (a+c) \langle X_1,X_2 \rangle +d \langle X_1,u \rangle \langle X_2,u \rangle, \\
\widetilde{G}_{(x,u)}(X_1^h,Y_1^{t_G}) =\widetilde{G}_{(x,u)}(Y_1^{t_G},X_1^h ) = b \langle X_1,Y_1 \rangle,\\
\widetilde{G}_{(x,u)}(Y_1^{t_G},Y_2^{t_G}) = a \langle Y_1,Y_2 \rangle,
\end{array}
\right.
\end{equation}
for all $X_i, Y_i \in M_x$, $i=1,2$, with $Y_i$ orthogonal to $u$.

Notice that the horizontal and tangential distributions  are $\widetilde G$-orthogonal to one another if and only if $b=0$. Metrics on $T_1 M$ belonging to this special subclass have been called \emph{of Kaluza-Klein type} \cite{CP1}, and have been recently used to investigate several interesting geometric behaviours \cite{CP}-\cite{CP2}. Up to our knowledge, pseudo-Riemannian $g$-natural metrics were only considered in \cite{CP2} in the context of metrics of Kaluza-Klein type, and the above discussion is the first attempt to start a systematic investigation of pseudo-Riemannian $g$-natural metrics.

The Levi-Civita connection $\widetilde \nabla$ and curvature tensor $\widetilde R$ of $(T_1 M, \widetilde G)$ can be computed using the fact that $(T_1 M,\widetilde{G})$ is a hypersurface of $(TM,G)$. When $\phi >0$, this leads to the following formulas, already obtained (again, using implicitly the assumption $\phi >0$) in \cite{AC1} and \cite{AC2} for the Riemannian case.  Throughout the paper, the curvature tensor $R$ is taken with the sign convention $R(X,Y)=[\nabla_X,\nabla_Y]\,-\,\nabla_{[X,Y]}$.

\begin{Pro}[\cite{AC1}]\label{lcc-T1M}
The Levi-Civita connection $\widetilde{\nabla}$ associated to
$\widetilde{G}$ is given, at $(x,u) \in T_1 M$, by \arraycolsep2pt
\begin{align*}
(\widetilde{\nabla}_{X^h}Y^h)_{(x,u)} & =   \left\{(\nabla_X Y)_x
                       -\frac{ab}{2\alpha} [R(X_x,u)Y_x
                       + R(Y_x,u)X_x] \right.
                     +\frac{bd}{2\alpha} [\langle X,u \rangle Y_x
                        + \langle Y,u \rangle X_x] \\
                   & \qquad  +\frac{b}{(a+c+d)\alpha} [(ad+b^2)\,
                       \langle R(X_x,u)Y_x,u\rangle
                     \left.-\frac{}{}d(a+c+d)\,
                      \langle X,u \rangle\langle Y,u \rangle]u\right\}^h \\
                   &  + \left\{\frac{b^2}{\alpha} \,R(X_x,u)Y_x
                       -\frac{a(a+c)}{2\alpha} \,R(X_x,Y_x)u \right.
                     -\frac{(a+c)d}{2\alpha} \,
                       [\langle Y,u \rangle\,X_x +\langle X,u \rangle\,Y_x] \\
                   & \qquad  +\frac{1}{\alpha} [-b^2\,
                       \langle R(X_x,u)Y_x,u\rangle
                     \left.+\frac{}{}d(a+c) \,
                      \langle Y,u \rangle\langle X,u \rangle]u \right\}^{t_G},\\
(\widetilde{\nabla}_{X^h}Y^{t_G})_{(x,u)} & =
                  \left\{-\frac{a^2}{2\alpha} \,R(Y_x,u)X_x
                  \right.
                   +\frac{ad}{2\alpha}\,\langle X,u \rangle\,Y_x\\
                 & \left. \qquad
                   +\frac{1}{2(a+c+d)\alpha}\,[a(ad+b^2)\,\langle R(X_x,u)Y_x,u \rangle+d \alpha\,\langle X,Y \rangle]u\right\}^h \\
                 &  +\left\{(\nabla_X Y)_x
                     +\frac{ab}{2\alpha} \,R(Y_x,u)X_x
                   -\frac{bd}{2\alpha}\,\langle X,u \rangle\,Y_x
                   -\frac{ab}{2\alpha}\,\langle R(X_x,u)Y_x,u\rangle u \right\}^{t_G},\\
(\widetilde{\nabla}_{X^{t_G}}Y^h)_{(x,u)} & =
                  \left\{-\frac{a^2}{2\alpha} \,R(X_x,u)Y_x
                    +\frac{ad}{2\alpha}\,\langle Y,u \rangle\,X_x \right.\\
                 & \qquad \left. +\frac{1}{2(a+c+d)\alpha}\,[a(ad
                      +b^2)\,\langle R(X_x,u)Y_x,u \rangle +d \alpha\,\langle X,Y \rangle ]u\right\}^h \\
                 &  +\left\{ \frac{ab}{2\alpha} \,R(X_x,u)Y_x
                     -\frac{bd}{2\alpha}\,\langle Y,u \rangle\,X_x
                    -\frac{ab}{2\alpha}\,\langle R(X_x,u)Y_x,u \rangle u\right\}^{t_G},\\
(\widetilde{\nabla}_{X^{t_G}}Y^{t_G})_{(x,u)} & =0,
\end{align*}
for all arbitrary vectors $X,Y \in M_x$ satisfying the convention in \eqref{conv}.
\end{Pro}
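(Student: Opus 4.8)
The plan is to treat $(T_1 M,\widetilde G)$ as the nondegenerate hypersurface $\{\langle u,u\rangle=1\}$ of the ambient manifold $(TM,G)$ and to recover $\widetilde\nabla$ from the ambient Levi-Civita connection $\nabla^G$ through the Gauss formula. Since we are assuming $\phi>0$, the unit normal $N^G$ recorded above is spacelike, so $\phi/|\phi|=1$ and $G(N^G,N^G)=1$; the Gauss formula then reads $\widetilde\nabla_U V=\nabla^G_U V-G(\nabla^G_U V,N^G)\,N^G$ for all $U,V$ tangent to $T_1 M$. Thus the whole computation reduces to evaluating $\nabla^G$ on horizontal and tangential lifts and then dropping the $N^G$-component.

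First I would compute the ambient connection $\nabla^G$ of the $g$-natural metric $G$ via Koszul's formula. The ingredients are the metric coefficients of \eqref{exp-$g$-nat} together with the classical bracket relations $[X^h,Y^h]_{(x,u)}=[X,Y]^h-(R(X,Y)u)^v$, $[X^h,Y^v]=(\nabla_X Y)^v$ and $[X^v,Y^v]=0$. Solving the resulting linear system amounts to inverting the $2\times 2$ metric blocks, which is exactly where the quantity $\alpha=a(a+c)-b^2$ (and its $TM$-analogue) enters the denominators, while the curvature $R$ enters through the vertical part of $[X^h,Y^h]$.

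Next I would restrict to $T_1 M$ by putting $r=1$, so that the coefficient functions collapse to the constants $a,b,c,d$, and rewrite each vertical lift via $X^v=X^{t_G}+\sqrt{|\phi|/|a+c+d|}\,\langle X,u\rangle\,N^G$. A key simplification is that the tangent directions of $T_1 M$, namely the $X^h$ and the $Y^{t_G}$ with $Y\perp u$, all annihilate $r^2=\langle u,u\rangle$; consequently the derivatives $\alpha_i',\beta_i'$ of the coefficient functions survive only in the normal ($N^G$-)component of $\nabla^G_U V$ and are discarded by the Gauss projection. This is why the four formulas of the statement involve only $a,b,c,d$ and $R$, with no derivatives. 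Feeding the expressions into the Gauss formula and keeping the tangential part yields the three nonzero formulas, while the identity $\widetilde\nabla_{X^{t_G}}Y^{t_G}=0$ reflects the fact that $\nabla^G_{X^v}Y^v$ is purely normal once $X,Y\perp u$.

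The main obstacle is organisational rather than conceptual: one must carry the numerous $R(\cdot,u)\cdot$, $\langle\cdot,u\rangle$ and $\langle R(\cdot,u)\cdot,u\rangle$ terms through both Koszul's formula and the projection without sign or coefficient slips. The essential point justifying the statement in the present pseudo-Riemannian setting is that this derivation is formally identical to the one of \cite{AC1,AC2}: nothing in it uses the Riemannian sign conditions \eqref{cond-Riem} beyond the mere existence of a well-defined, non-null normal, which is guaranteed by $\phi\neq0$. The only quantity sensitive to signature is the projection factor $\phi/|\phi|$, and for $\phi>0$ it equals $1$, so the formulas coincide verbatim with the Riemannian ones.
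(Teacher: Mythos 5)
Your proposal is correct and follows exactly the route the paper indicates: it does not reprove the formulas but imports them from \cite{AC1}, observing that the computation of $\widetilde\nabla$ via the Gauss formula for the hypersurface $T_1M\subset(TM,G)$, with the stated normal $N^G$, goes through verbatim in the pseudo-Riemannian case once $\phi>0$ makes $N^G$ spacelike and the projection factor $\phi/|\phi|$ equal to $1$. Your additional remarks (Koszul's formula with the standard bracket relations of lifts, and the fact that tangent directions to $T_1M$ annihilate $r^2$ so that the derivatives $\alpha_i',\beta_i'$ only enter the normal component) are exactly the reasons the induced connection depends only on the constants $a,b,c,d$, consistent with the paper's treatment.
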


\begin{Pro}[\cite{AC2}]\label{riem-curv-T1M}
Let $(M,\langle,\rangle )$ be a Riemannian manifold, $G$ a $g$-natural metric on $TM$ and $\widetilde{G}$ the $g$-natural metric on $T_1 M$ induced by $G$, which is determined by \eqref{g-tilde2} with $a,b,c,d$ constants. Then the Riemannian curvature tensor at $(x,u) \in T_1 M$ is given by \arraycolsep0pt
\begin{align*}
& (i) \widetilde{R} (  X^h , Y^h  ) Z^h \\
      & = \left\{ R(X,Y)Z + \frac{ab}{2\alpha} \,[2(\nabla_u R)(X,Y)Z
          -(\nabla_Z R)(X,Y)u]\right.  \\
      &+\frac{a^2}{4\alpha} \,[R(R(Y,Z)u,u)X -R(R(X,Z)u,u)Y-2R(R(X,Y)u,u)Z]\\
      & +\frac{a^2b^2}{4\alpha^2} \,[R(X,u)R(Y,u)Z -R(Y,u)R(X,u)Z+R(X,u)R(Z,u)Y -R(Y,u)R(Z,u)X]\\
      & + \frac{ad(\alpha-b^2)}{4\alpha^2}\,[\langle Z,u \rangle R(X,Y)u +\langle Y,u \rangle R(X,u)Z -\langle X,u \rangle R(Y,u)Z ]\\
      & +\frac{ab^2}{2\alpha^2} \,\left[-\frac{ad+b^2}{a+c+d}\,  \langle R(Y,u)Z,u \rangle +d\, \langle Y,u \rangle \langle Z,u \rangle\right]R_u X\\
      & -\frac{ab^2}{2\alpha^2} \,\left[-\frac{ad+b^2}{a+c+d}\,
        \langle R(X,u)Z,u\rangle +d\, \langle X,u \rangle\langle Z,u \rangle\right]R_u Y \\
      & +\frac{d}{4\alpha} \,\left[-\frac{2b^2}{a+c+d}\,\langle R(Y,u)Z,u \rangle
        +d\,\langle Y,u \rangle \langle Z,u \rangle\right]X \\
      & -\frac{d}{4\alpha} \,\left[-\frac{2b^2}{a+c+d}\,\langle R(X,u)Z,u \rangle
        +d\,\langle X,u \rangle\langle Z,u \rangle\right]Y\\
       &   +\frac{d}{4\alpha(a+c+d)} \left\{-4ab \langle (\nabla_u R)(X,Y)Z,u \rangle \right.\\
       &   +a^2\,[ \langle R(Y,Z)u,R(X,u)u \rangle
           -\langle R(X,Z)u,R(Y,u)u\rangle -2 \langle R(X,Y)u,R(Z,u)u \rangle] \\
       &   +\frac{a^2b^2}{\alpha}\,[\langle R(Y,u)Z+R(Z,u)Y,R(X,u)u\rangle
          -\langle R(X,u)Z+R(Z,u)X,R(Y,u)u\rangle] \\
       &   -\left[\frac{ad(b^2 -\alpha)}{\alpha}
          +\frac{2b^2 d(\phi+2b^2)}{\phi(a+c+d)}
          +\frac{4b^2 \alpha}{\phi} \right]
          \,[\langle X,u \rangle\langle R(Y,u)Z,u \rangle
         -\langle Y,u \rangle\langle R(X,u)Z,u \rangle]\\
         & - 3a(a+c)\, \langle R(X,Y)Z,u \rangle
          +(a+c)d\, [\langle X,u \rangle  \langle Y,Z \rangle
          -\langle Y,u \rangle \langle X,Z \rangle]\left. \left. \right\}u \right\} ^h\\
           &   +\left\{ -\frac{b^2}{\alpha} \,(\nabla_u R)(X,Y)Z
          +\frac{a(a+c)}{2\alpha}(\nabla_Z R)(X,Y)u\right.\\
          &  -\frac{ab}{4\alpha} \,[R(R(Y,Z)u,u)X -R(R(X,Z)u,u)Y -2R(R(X,Y)u,u)Z\\
            & -R(X,R(Y,u)Z)u -R(X,R(Z,u)Y)u +R(Y,R(X,u)Z)u +R(Y,R(Z,u)X)u]\\
            &   -\frac{ab^3}{4\alpha^2} \,[R(X,u)R(Y,u)Z -R(Y,u)R(X,u)Z +R(X,u)R(Z,u)Y -R(Y,u)R(Z,u)X]\\
            & -\frac{bd(3\alpha-b^2)}{4\alpha^2}\,[\langle Z,u \rangle R(X,Y)u +\langle Y,u \rangle R(X,u)Z -\langle X,u \rangle R(Y,u)Z ]\\
            & +\frac{b(b^2-\alpha)}{2\alpha^2} \, \left[\frac{ad+b^2}{a+c+d}\,\langle R(Y,u)Z,u \rangle -d \, \langle Y,u \rangle\langle Z,u \rangle\right]R_u X \\
            & -\frac{b(b^2-\alpha)}{2\alpha^2} \,
        \left[\frac{ad+b^2}{a+c+d}\,\langle R(X,u)Z,u \rangle -d \, \langle X,u \rangle\langle Z,u \rangle\right]R_u Y \\
      & \left. +\frac{(a+c)bd}{2\alpha(a+c+d)} \,[\langle R(Y,u)Z,u \rangle X
        -\langle R(X,u)Z,u \rangle Y ] \right\} ^{t_G},
\end{align*}
\begin{align*}
      & (ii) \widetilde{R} (  X^h,  Y^{t_G}  ) Z^h \\
      & = \left\{ -\frac{a^2}{2\alpha} \,(\nabla_X R)(Y,u)Z
          +\frac{ab}{2\alpha}[R(X,Y)Z +R(Z,Y)X]\right. \\
      &   +\frac{a^3b}{4\alpha^2} \,[R(X,u)R(Y,u)Z
        -R(Y,u)R(X,u)Z -R(Y,u)R(Z,u)X] \\
      &+ \frac{a^2bd}{4\alpha^2}\,[\langle X,u \rangle R(Y,u)Z -\langle Z,u \rangle R(X,Y)u]\\
      & -\frac{ab}{4\alpha^2(a+c+d)} \,[a(ad+b^2)\,\langle R(Y,u)Z,u \rangle
        +\alpha d\, \langle Y,Z \rangle  ]R_u X \\
      & +\frac{a^2b}{2\alpha^2} \,\left[\frac{ad+b^2}{a+c+d}\,\langle R(X,u)Z,u \rangle
        -d\, \langle X,u \rangle\langle Z,u \rangle\right]R_u Y \\
      & -\frac{bd}{4\alpha(a+c+d)} \,[a\,\langle R(Y,u)Z,u \rangle
        +(2(a+c)+d)\,  \langle Y,Z \rangle ]X \\
      & +\frac{b}{\alpha} \,\left[-\frac{ad+b^2}{2(a+c+d)}\,\langle R(X,u)Z,u \rangle
        +d\, \langle X,u \rangle\langle Z,u \rangle\right]Y
       -\frac{bd}{2\alpha}\,\langle X,Y \rangle Z\\
      &\left.+\frac{d}{4\alpha(a+c+d)} \right\{2a^2\,\langle (\nabla_X
        R)(Y,u)Z,u \rangle +\frac{a^3b}{\alpha}\,[\langle R(Y,u)Z,R(X,u)u \rangle  \\
               &            -\langle R(X,u)Z+R(Z,u)X,R(Y,u)u \rangle ]  +ab\,\left[-\frac{\alpha+\phi}{\alpha}
        +\frac{d}{a+c+d}\right]\, \langle X,u \rangle\langle R(Y,u)Z,u \rangle \\
      &  - 2ab\, [2 \langle R(X,Y)Z,u\rangle +\langle R(Z,Y)X,u\rangle ] \\
      & \left.\left. +bd \,\left[\left(3-\frac{d}{a+c+d}\right)\,
        \langle X,u \rangle  \langle Y,Z \rangle +2\, \langle Z,u \rangle \langle X,Y \rangle \right]\right\}u \right\} ^h \\
      &   +\left\{\frac{ab}{2\alpha} \,(\nabla_X R)(Y,u)Z
         +\frac{a^2}{4\alpha}\, R(X,R(Y,u)Z)u \right.\\
      &     -\frac{a^2b^2}{4\alpha^2} \,[R(X,u)R(Y,u)Z  -R(Y,u)R(X,u)Z -R(Y,u)R(Z,u)X]\\
      &  -\frac{b^2}{\alpha}\, R(X,Y)Z +\frac{a(a+c)}{2\alpha}R(X,Z)Y
         + \frac{ad(\alpha-b^2)}{4\alpha^2}\,[\langle X,u \rangle R(Y,u)Z
         -\langle Z,u \rangle R(X,Y)u] \\
      & -\frac{\alpha-b^2}{4\alpha^2(a+c+d)} \,[a(ad+b^2)\,\langle R(Y,u)Z,u \rangle
        +\alpha d\, \langle Y,Z \rangle  ]R_u X \\
      & +\frac{ab^2}{2\alpha^2} \,\left[-\frac{ad+b^2}{a+c+d}\,
        \langle R(X,u)Z,u \rangle +d\, \langle X,u \rangle\langle Z,u \rangle\right]R_u Y \\
      & +\frac{(a+c)d}{4\alpha(a+c+d)} \,[a\,\langle R(Y,u)Z,u \rangle
        +(2(a+c)+d)\,  \langle Y,Z \rangle ]X \\
      &  +\frac{1}{4\alpha} \,
        \left[2b^2\left(2-\frac{d}{a+c+d}\right)\,\langle R(X,u)Z,u \rangle
         -d(4(a+c)+d)\, \langle X,u \rangle\langle Z,u \rangle\right]Y   \\
      &\left. +\frac{(a+c)d}{2\alpha}\, \langle X,Y \rangle Z  \right\}^{t_G},
\end{align*}
\begin{align*}
 & (iii) \widetilde{R} (  X^{t_G} , Y^{t_G}  ) Z^{t_G} =
\frac{1}{2\alpha(a+c+d)} \left[ \left\{a^2b\frac{}{}\,
[ \langle Y,Z \rangle R_u X -\langle X,Z \rangle R_u Y]\right.\right. \\
  & \left. \frac{}{}-b(\alpha+\phi)[ \langle Y,Z \rangle X-\langle X,Z \rangle Y] \right\}^h  +\left\{-ab^2\frac{}{}\,[ \langle Y,Z \rangle R_u X -\langle X,Z \rangle R_u Y]\right. \\
  & \left. \left. +[(a+c)(\alpha+\phi) +\alpha d]\,
    [ \langle Y,Z \rangle X-\langle X,Z \rangle Y] \right\}^{t_G}\frac{}{}\right],
\end{align*}
for all arbitrary vectors $X,Y,Z \in M_x$ satisfying the convention \eqref{conv}, where
$R_u X = R(X,u)u$ denotes the \emph{Jacobi operator} associated to
$u$.
\end{Pro}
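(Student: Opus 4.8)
The plan is to exploit the fact, recalled just before the statement, that $(T_1 M,\widetilde G)$ is a non-degenerate hypersurface of $(TM,G)$ whenever $\phi>0$, with explicit unit normal $N^G$, and to obtain $\widetilde R$ from the \emph{Gauss equation}. Writing $\epsilon=G(N^G,N^G)=\pm1$ and letting $S$ denote the shape operator (Weingarten map) of $T_1 M$ associated to $N^G$, the Gauss equation takes the form
\[
\widetilde R(U,V)W=\big(R^G(U,V)W\big)^{\top}+\epsilon\,[\,\widetilde G(SV,W)\,SU-\widetilde G(SU,W)\,SV\,],
\]
where $(\cdot)^{\top}$ denotes the tangential projection along $N^G$ and $R^G$ is the curvature of $(TM,G)$; the sign is fixed by the convention $R(X,Y)=[\nabla_X,\nabla_Y]-\nabla_{[X,Y]}$ adopted above. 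Thus the proof splits into three inputs: the shape operator $S$, the ambient curvature $R^G$, and the machinery of the tangential projection.

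First I would record the shape operator. Since the second fundamental form is the normal part of $\nabla^G$, i.e. $II(U,V)=\nabla^G_U V-\widetilde\nabla_U V$ with $II(U,V)=\epsilon\,\widetilde G(SU,V)\,N^G$, the operator $S$ is read off by comparing the $g$-natural connection $\nabla^G$ on $TM$ with $\widetilde\nabla$ of Proposition~\ref{lcc-T1M}; equivalently $SU=-(\nabla^G_U N^G)^{\top}$, computed by differentiating the explicit expression of $N^G$ and using the covariant-derivative formulas for horizontal and vertical lifts. I expect $S$ to be expressible through the Jacobi operator $R_u$ and the constants $a,b,c,d$, which is exactly what will produce the quadratic curvature terms ($R(R(\cdot,u)\cdot,u)\cdot$ and the scalars $\langle R(\cdot,u)\cdot,u\rangle$) appearing in the statement.

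Second, I would assemble the ambient curvature from $R^G(U,V)W=\nabla^G_U\nabla^G_V W-\nabla^G_V\nabla^G_U W-\nabla^G_{[U,V]}W$. The standard device here is to extend $X,Y,Z\in M_x$ to vector fields on $M$, lift them to $TM$, and use the known bracket and covariant-derivative formulas for horizontal and vertical lifts under a $g$-natural metric (as in \cite{AS2}, \cite{AC1}); differentiating a horizontal lift horizontally reproduces the base curvature $R$, and the appearance of $(\nabla_u R)$, $(\nabla_X R)$, $(\nabla_Z R)$ in the statement comes precisely from horizontal differentiation sweeping the base point. Finally I would decompose everything into $h$- and $t_G$-components, replacing vertical by tangential lifts through the identity $X^{t_G}=X^v-\sqrt{|\phi|/|a+c+d|}\,\langle X,u\rangle N^G$ and its corollary $X^{t_G}=X^t+\tfrac{b}{a+c+d}\langle X,u\rangle u^h$, and specialise to the three cases. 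Case $(iii)$, with all three arguments tangential, is by far the simplest, since $\widetilde\nabla_{X^{t_G}}Y^{t_G}=0$ forces the intrinsic curvature there to be governed entirely by the shape-operator term; cases $(i)$ and $(ii)$ require the full ambient curvature.

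The main obstacle is not conceptual but the sheer combinatorial bookkeeping: substituting the lengthy $g$-natural connection into the curvature definition, carrying the terms quadratic in $R$ and first order in $\nabla R$ through the tangential projection, and collecting the coefficients as rational functions of $a,b,c,d$ with denominators $\alpha$, $\alpha^2$, $\phi$ and $a+c+d$. I would organise the work by the tensorial type of each term (algebraic in $R$, quadratic in $R$, first order in $\nabla R$); notice also that the whole derivation is insensitive to the signature of $\widetilde G$, depending only on $\phi>0$, so the formulas hold verbatim in the pseudo-Riemannian case needed later. As consistency checks at the end I would verify the symmetries and the first Bianchi identity of $\widetilde R$, and confirm that $b=0$ (the Kaluza–Klein case) and the Sasaki values $a=1$, $b=c=d=0$, for which $\alpha=\phi=1$, recover the classical expressions, which provides a strong test of correctness.
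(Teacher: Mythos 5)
Your proposal follows exactly the route the paper indicates for this result: Proposition~\ref{riem-curv-T1M} is quoted from \cite{AC2}, and both that reference and the surrounding text derive $\widetilde R$ by treating $(T_1M,\widetilde G)$ as a non-degenerate hypersurface of $(TM,G)$ with unit normal $N^G$, applying the Gauss equation with the shape operator read off from the ambient $g$-natural connection, and projecting onto horizontal and tangential lifts. The outline is sound and matches the paper's approach; what remains is only the (admittedly heavy) bookkeeping you already acknowledge, together with the sign discipline for the pseudo-Riemannian case via $\epsilon=G(N^G,N^G)$.
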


\section{Paracontact $g$-natural  metric structures}\label{sec-para}

An \emph{almost paracontact structure} $(\varphi,\xi,\eta)$ (as defined in \cite{kaneyuki} and later investigated in \cite{zamkovoy}) on a
$(2n+1)$-dimensional smooth manifold is given by a
$(1,1)$-tensor field $ \varphi$, a vector field $\xi$ and a $1$-form $\eta$, satisfying  
\begin{enumerate}[(i)]
  \item $\eta(\xi)=1, \quad  \varphi^2=I-\eta\otimes\xi$, and
  \vspace{1mm}\item the eigendistributions ${\mathcal D}^+$ and ${\mathcal D}^-$ of $ \varphi$ corresponding to the eigenvalues $1$ and $-1$ respectively, have equal dimension $n$.
\end{enumerate}
A pseudo-Riemannian metric $g$ is said to be \emph{compatible} with the paracontact structure if
\begin{equation}\label{compatibile}
  g( \varphi X, \varphi Y)=-  g(X,Y)+\eta(X)\eta(Y),
\end{equation}
for all $X,Y$ vector fields on $M$.

It follows from the definition that $ \varphi\xi=0$, $\eta\circ  \varphi=0$ and $\varphi$ has rank $2n$. Moreover, a compatible metric $g$ is necessarily of signature $(n+1,n)$, with $\xi$ unit and spacelike  and the distribution $\{\xi\}^\perp$ of neutral signature $(n,n)$. Finally, equation \eqref{compatibile} also yields that $\eta= {g}(\cdot,\xi)$ and $ {g}(\cdot, \varphi\cdot)=- {g}( \varphi\cdot,\cdot)$.

The \emph{fundamental $2$-form} of the almost paracontact metric manifold is defined by
$ \Phi(X,Y)= {g}(X, \varphi Y)$. If $d\eta= \Phi$, then $\eta$ is a contact form, $g$ is said to be an \emph{associated metric} and $(M, \varphi,\xi,\eta,  g)$ is called a \emph{paracontact metric manifold}.

A paracontact metric structure $(\varphi,\xi,\eta,g)$ is said to be \emph{K-paracontact} if $\xi$ is a Killing vector field. This is equivalent to requiring that $h=0$, where $h=\frac{1}{2} L_{\xi} \varphi$ and $L$ is the usual Lie derivative.
On a paracontact metric manifold \cite{zamkovoy}, one has that $h$ is self-adjoint, $h (\xi)= \text{ tr} (h)=0$
and
\begin{equation}\label{h-properties}
\nabla {\xi}=-\varphi +\varphi {h}, \quad {h} \varphi=-\varphi h, \quad \eta \circ {h}=0.
\end{equation}
Like in contact metric geometry, the tensor $h$ is essential in describing the geometric properties of a paracontact metric structure.

A paracontact metric manifold $(M,\varphi,\xi,\eta,g)$ is called \emph{paraSasakian} when it is normal, that is, satisfies the integrability condition
\[
N_{\varphi}:=[{\varphi},{\varphi}]-2d \eta \otimes \xi=0.
\]
This is equivalent to
\begin{equation}\label{parasasakian}
(\nabla_{Z} \varphi)W=-g(Z,W)\xi+\eta(W)Z,
\end{equation}
for all vector fields $Z,W$ tangent to $M$. Every paraSasakian manifold is K-paracontact. The converse holds in dimension $3$ but in general fails in higher dimension.
For these and further results on paracontact metric structures, we refer to \cite{zamkovoy}.

We will now see how to define a paracontact structure on $T_1 M$, having $g$-natural metrics $\widetilde{G}$ (of suitable signature) as associated metrics.

\begin{Th}\label{th-paracontact}
Let $\widetilde G$ denote a pseudo-Riemannian $g$-natural metric on the unit tangent sphere bundle $T_1 M$, described as in \eqref{g-tilde2}. 
%
%
Consider the paracontact structure  $(\widetilde\varphi,\widetilde\xi,\widetilde\eta)$, where we put
\begin{equation}\label{xi}
\widetilde\xi=\rho u^h,
\end{equation}
for some real constant $\rho>0$,

\begin{equation} \label{eta}
\widetilde\eta(X^h)=\frac{1}{\rho} \langle X,u \rangle, \qquad \widetilde\eta(Y^{t_G})=b\rho \langle Y,u \rangle,
\end{equation}
and
\begin{equation} \label{phi}
\left\{
\begin{aligned}
\widetilde\varphi(X^h)&=\frac{1}{2\rho\alpha}\left( -bX^h+(a+c)X^{t_G}+\frac{bd}{a+c+d} \langle X,u\rangle u^h \right),\\
\widetilde\varphi(Y^{t_G})&=\frac{1}{2\rho\alpha}\left( -aY^h+b Y^{t_G} +\frac{\phi}{a+c+d} \langle Y,u \rangle u^h \right),\\
\end{aligned}
\right.
\end{equation}
for all $X,Y \in M_x$.

Then, $\widetilde G$ is an associated metric for $(\widetilde\varphi,\widetilde\xi,\widetilde\eta)$ if and only if
\begin{align}\label{para-cond}
a+c+d=-4\alpha=\frac{1}{\rho^2}.
\end{align}
\end{Th}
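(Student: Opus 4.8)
The plan is to read \emph{$\widetilde G$ is an associated metric} as the conjunction of three requirements: that $(\widetilde\varphi,\widetilde\xi,\widetilde\eta)$ be an almost paracontact structure (in particular $\widetilde\varphi^2=I-\widetilde\eta\otimes\widetilde\xi$), that $\widetilde G$ be compatible in the sense of \eqref{compatibile}, and that $d\widetilde\eta=\widetilde\Phi$ with $\widetilde\Phi(A,B)=\widetilde G(A,\widetilde\varphi B)$. All computations I would carry out in the frame $\{u^h\}\cup\{X^h:X\perp u\}\cup\{Y^{t_G}:Y\perp u\}$ coming from \eqref{conv}, using \eqref{g-tilde2} for $\widetilde G$, \eqref{phi} for $\widetilde\varphi$, the identity $u^{t_G}=\tfrac{b}{a+c+d}u^h$, and $\langle u,u\rangle=1$ on $T_1M$.

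For the \emph{necessity} of \eqref{para-cond} I would extract the two equalities separately. Applying $\widetilde\varphi$ twice to $X^h$ with $X\perp u$, the cross terms cancel and one finds $\widetilde\varphi^2 X^h=-\tfrac{1}{4\rho^2\alpha}X^h$ after substituting $\alpha=a(a+c)-b^2$; since $\widetilde\eta(X^h)=0$, the identity $\widetilde\varphi^2=I-\widetilde\eta\otimes\widetilde\xi$ forces $-4\rho^2\alpha=1$, i.e. $-4\alpha=1/\rho^2$. Independently, substituting $u^{t_G}=\tfrac{b}{a+c+d}u^h$ into \eqref{phi} gives $\widetilde\varphi\,u^h=0$, hence $\widetilde\varphi\widetilde\xi=0$; evaluating \eqref{compatibile} at $\widetilde\xi$ then reads $0=-\rho^2(a+c+d)+1$, that is $a+c+d=1/\rho^2$. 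These two relations are exactly \eqref{para-cond}.

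For \emph{sufficiency} I would assume \eqref{para-cond} and check each requirement. The almost paracontact axioms are immediate: $\widetilde\eta(\widetilde\xi)=1$ always; $\widetilde\varphi^2=I-\widetilde\eta\otimes\widetilde\xi$ holds on $u^h$ (both sides vanish) and on $X^h,Y^{t_G}$ with $X,Y\perp u$ (both reduce to multiplication by $-\tfrac{1}{4\rho^2\alpha}=1$), whence $\widetilde\varphi\widetilde\xi=0$ and $\widetilde\eta\circ\widetilde\varphi=0$ follow as in the discussion after \eqref{compatibile}, and the equal-dimension condition holds because compatibility makes $\mathcal D^{\pm}$ totally $\widetilde G$-isotropic inside the neutral $2n$-dimensional space $\{\widetilde\xi\}^\perp$. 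Compatibility \eqref{compatibile} I would verify on all frame pairs by direct expansion; the representative case $\widetilde G(\widetilde\varphi X^h,\widetilde\varphi Y^h)=\tfrac{a+c}{4\rho^2\alpha}\langle X,Y\rangle=-(a+c)\langle X,Y\rangle$ matches $-\widetilde G(X^h,Y^h)$, and the $u^h$-pair reproduces $a+c+d=1/\rho^2$.

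The decisive and most laborious step is the contact condition $d\widetilde\eta=\widetilde\Phi$. Since $\widetilde\eta=\widetilde G(\cdot,\widetilde\xi)$ and $\widetilde\nabla$ is torsion-free, $d\widetilde\eta(A,B)=\widetilde G(\widetilde\nabla_A\widetilde\xi,B)-\widetilde G(\widetilde\nabla_B\widetilde\xi,A)$ up to the chosen normalization of $d$, so I would compute $\widetilde\nabla\widetilde\xi=\rho\,\widetilde\nabla u^h$ from Proposition \ref{lcc-T1M}, treating $u^h$ as the geodesic flow field. The expected obstacle is that $\widetilde\nabla u^h$ carries curvature terms such as $R(\cdot,u)u$; one must show that, after antisymmetrization and under \eqref{para-cond}, these cancel and leave precisely $\widetilde G(A,\widetilde\varphi B)$ on all mixed horizontal/tangential pairs. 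I expect this identity to hold without producing any further constraint, consistent with the stated equivalence, since necessity has already pinned down both equalities; thus the whole content of this final step is verifying that the particular coefficients $\tfrac{1}{2\rho\alpha}$ and $\tfrac{\phi}{a+c+d}$ appearing in \eqref{phi} are exactly those making $\widetilde\Phi$ coincide with $d\widetilde\eta$.
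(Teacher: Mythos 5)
Your proposal is correct in substance but organises the argument differently from the paper. The two numerical constraints are extracted exactly as in the paper's proof ($-4\rho^2\alpha=1$ from $\widetilde\varphi^2=I-\widetilde\eta\otimes\widetilde\xi$ on $X^h$ with $X\perp u$, and $\rho^2(a+c+d)=1$ from the unit/compatibility condition along $\widetilde\xi$), and your verification of \eqref{compatibile} matches. The two genuine divergences are these. First, for the equal-dimension condition on $\mathcal D^{\pm}$ the paper solves the eigenvector systems explicitly, splitting into the cases $a\neq0$ and $a=0$ (where $b\neq 0$ and one eigendistribution becomes purely tangential); your abstract argument --- $\varphi\widetilde\xi=0$ forces $\eta\circ\varphi=0$, so $\varphi$ restricts to an involution of $\ker\widetilde\eta$ whose eigenspaces are totally isotropic in a space of neutral signature $(n,n)$ (guaranteed by $\alpha=-1/(4\rho^2)<0$), hence each has dimension exactly $n$ --- is shorter, avoids the case split, and is sound. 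Second, the paper \emph{defines} $\widetilde\varphi$ by $\widetilde G(\cdot,\widetilde\varphi\cdot)=d\widetilde\eta(\cdot,\cdot)$ and $\widetilde\eta$ as the metric dual of $\widetilde\xi$, so the contact condition holds by construction and the burden is only to check that this definition reproduces \eqref{eta}--\eqref{phi}; you instead take \eqref{phi} as given and propose to verify $d\widetilde\eta=\widetilde\Phi$ via $\widetilde\nabla\widetilde\xi$. The two burdens are equivalent, but you leave yours as an expectation rather than carrying it out, and you should be aware of one pitfall in your chosen route: Proposition~\ref{lcc-T1M} is stated only under the assumption $\phi>0$, while the paracontact structures of this theorem include cases with $\phi\leq0$ (e.g.\ $a=0$ gives $\phi=-b^2<0$). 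Since $d\widetilde\eta$ is independent of the connection, the cleaner execution is to compute $d\widetilde\eta(A,B)=A\widetilde\eta(B)-B\widetilde\eta(A)-\widetilde\eta([A,B])$ from the bracket formulas for horizontal and tangential lifts (or to follow the paper and read $\widetilde\varphi$ off from $d\widetilde\eta$ by inverting \eqref{g-tilde2}), which avoids the sign issues entirely and, reassuringly, produces no curvature terms at all.
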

\begin{proof}
We start by introducing $\widetilde{\xi}:=\rho u^h$, where $\rho$ is a positive constant. Then, equation~\eqref{conv} yields that the tangent space of $T_1 M$  at $(x,u)$ can be written as
\begin{equation} \label{conv2}
(T_1 M)_{(x,u)} =\text{span}(\widetilde\xi) \oplus \{X^h \ / \ X\perp u \} \oplus \{ Y^{t_G} \ / \ Y \perp u\}.
\end{equation}
As we already remarked in the previous section, requiring that $\widetilde\xi$ (collinear to $u^h$) is spacelike and that the restriction of $\widetilde G$ on ${\xi}^\perp$ is of neutral signature, by \eqref{cond-neutral-T1M} we find that the constants $a,b,c,d$ must satisfy $a+c+d>0$ and $\alpha=a(a+c)-b^2<0$. Moreover, it follows from \eqref{g-tilde2} that $\widetilde\xi$ is unit if and only if $\rho^2(a+c+d)=1$.

We now define $\widetilde{\eta}$ as the $1$-form dual to $\widetilde{\xi}$ with respect to $\widetilde{G}$, and $\widetilde{\varphi}$ by the condition $\widetilde{G}(\cdot,\widetilde{\varphi} \cdot)=(d\widetilde{\eta})(\cdot, \cdot)$. Formulas~\eqref{eta} and \eqref{phi} then follow directly from \eqref{g-tilde2}.

Next, we impose that the condition $\widetilde\varphi^2=I-\widetilde\eta\otimes\widetilde\xi$  holds for all $X^h$ and $Y^{t_G}$, with $X,Y \in M_x$ and $Y$ orthogonal to $u$, and we obtain
\begin{align*}
X^h-\langle X,u \rangle u^h&=-\frac{1}{4\rho^2 \alpha}(X^h-\langle X,u \rangle u^h),\qquad
Y^{t_G}=-\frac{1}{4\rho^2 \alpha} Y^{t_G}.
\end{align*}
Hence, $-4\rho^2\alpha=1$, which completes the proof of equation~\eqref{para-cond}. Taking into account this equation, it is now easy to check that condition~\eqref{compatibile} is satisfied.

We now prove that $\dim{\mathcal D}^+= \dim{\mathcal D}^-=n$, where ${\mathcal D}^\pm$ are the eigendistributions $\widetilde\varphi$ corresponding to the eigenvalues $\pm 1$.

An arbitrary $Z \in Ker \widetilde\eta$ (that is, orthogonal to $\widetilde\xi$) can be  written as $Z=X^h+Y^{t_G}$, with $X,Y \in M_x$ orthogonal to $u$.
Therefore, by equation~\eqref{phi} we find that $Z_1=X_1+Y_1 \in \mathcal{D}_{\widetilde\varphi}(1)$ if and only if
\[
X_1^h+Y_1^{t_G}=\left\{ -\frac{1}{2\rho\alpha}(bX_1+aY_1)\right\}^h +\left\{ -\frac{1}{2\rho\alpha}((a+c)X_1+bY_1)\right\}^{t_G}.
\]
Since the horizontal and tangential parts of a vector field are uniquely determined, the above condition yields
\begin{equation}\label{system-1}
\left\{
\begin{aligned}
&(b+2\rho\alpha)X_1 +aY_1     =0,\\
&(a+c)X_1+(b-2\rho\alpha)Y_1 =0.
\end{aligned}
\right.
\end{equation}
Notice that the two equations in \eqref{system-1} are linearly dependent because of condition $4\rho^2\alpha=-1$.

In the same way, $Z_2=X_2+Y_2 \in \mathcal{D}_{\widetilde\varphi}(-1)$ if and only if
\begin{equation}\label{system-2}
\left\{
\begin{aligned}
&(b-2\rho\alpha)X_2 +aY_2     =0,\\
&(a+c)X_2+(b+2\rho\alpha)Y_2  =0,
\end{aligned}
\right.
\end{equation}
and the two equations are again linearly dependent since $4\rho^2\alpha=-1$. We will now consider two different cases, according on whether $a=0$ or $a \neq 0$.

If $a\neq 0$, then \eqref{system-1} and \eqref{system-2} yield that
\begin{align*}
\mathcal{D}_{\widetilde\varphi}(1)= \left\{ X_1^h-\frac{b+2\rho\alpha}{a}X_1^{t_G} \ / \ X_1 \perp u \right\},\qquad
\mathcal{D}_{\widetilde\varphi}(-1)= \left\{ X_2^h-\frac{b-2\rho\alpha}{a}X_2^{t_G} \ / \ X_2 \perp u \right\}.
\end{align*}

So, considering an orthonormal basis $\{ e_0=u,e_1,\ldots,e_n\}$ on $T_xM$, we can construct  a basis on $T_{(x,u)}(T_1M)$ as $\{ e_0^h=u^h,e_1^h,e_1^{t_G},\ldots,e_n^h,e_n^{t_G}\}$. Hence,
\[
\mathcal{D}_{\widetilde\varphi}(\pm 1)=\text{span} \left( e_i^h-\frac{b \pm 2\rho\alpha}{a}e_i^{t_G} \ / \ i=1, \ldots, n \right),
\]
and both eigendistributions have dimension $n$.

If $a=0$, then  $\alpha=-b^2<0$ and so, $b\neq0$.  Systems~\eqref{system-1} and \eqref{system-2} then respectively become
\begin{equation}\label{system-a0}
\left\{
\begin{aligned}
&(1-2\rho b)X_1     =0,\\
&cX_1+b(1+2\rho b)Y_1 =0,
\end{aligned}
\right.
\quad
\text{ and}
 \quad
\left\{
\begin{aligned}
&(1+2\rho b)X_2     =0,\\
&cX_2+b(1-2\rho b)Y_2  =0.
\end{aligned}
\right.
\end{equation}
Equation \eqref{para-cond} now yields that $\frac{1}{4\rho^2}=-\alpha=b^2$. If $b=\frac{1}{2\rho}>0$, then the equations in \eqref{system-a0} reduce to $cX_1+2bY_1=0$ and $X_2=0$. So,
\begin{align*}
\mathcal{D}_{\widetilde\varphi}(1)&= \left\{ X_1^h-\frac{c}{2b}X_1^{t_G} \ / \ X_1 \perp u \right\},\qquad
\mathcal{D}_{\widetilde\varphi}(-1)= \left\{ Y_2^{t_G} \ / \ Y_2 \perp u \right\}.
\end{align*}
By a similar argument, if $b=-\frac{1}{2\rho}<0$, then we get $X_1=0$ and $cX_2+2bY_2=0$, so that
\begin{align*}
\mathcal{D}_{\widetilde\varphi}(1)&= \left\{ Y_1^{t_G} \ / \ Y_1 \perp u \right\},\qquad
\mathcal{D}_{\widetilde\varphi}(-1)= \left\{ X_2^h-\frac{c}{2b}X_2^{t_G} \ / \ X_2 \perp u \right\}.
\end{align*}
Using the basis $\{ e_0^h=u^h,e_1^h,e_1^{t_G},\ldots,e_n^h,e_n^{t_G}\}$ on $T_{(x,u)}(T_1M)$, we obtain
\begin{align*}
& \left\{ Y_2^{t_G} \ / \ Y_2 \perp u \right\}=\left\{ Y_1^{t_G} \ / \ Y_1 \perp u \right\}=\text{span} \left( e_i^{t_G} \ / \  i=1, \ldots, n \right),\\
&\left\{ X_1^h-\frac{c}{2b}X_1^{t_G} \ / \ X_1 \perp u \right\}=\left\{ X_2^h-\frac{c}{2b}X_2^{t_G} \ / \ X_2 \perp u \right\}=\text{span} \left( e_i^h-\frac{c}{2b}e_i^{t_G} \ / \ i=1, \ldots, n \right),
\end{align*}
so  $\dim \mathcal{D}_{\widetilde\varphi}(1)=\dim\mathcal{D}_{\widetilde\varphi}(-1)=n$ in both of the above cases.
\end{proof}

\begin{Rk}
(1) Since the paracontact distribution ker$\widetilde \eta$ is given by
\[
{\rm ker} \widetilde \eta = {\widetilde \xi}^\perp = \left\{X^h+Y^{t_G} \ / \ X,Y \perp u  \right\},
\]
it would suffice to write \eqref{eta} and \eqref{phi} for vectors orthogonal to $u$. However, we need the description of the extra terms involving $\langle X,u\rangle, \langle Y,u\rangle$, because $\langle Y,u \rangle=0$ at $x\in M$ does not mean that $\langle Y,u \rangle=0$ everywhere on $M$. So, $\w\nabla_Z (\langle Y,u \rangle) \neq0$ in general, and this term must be taken into account when calculating  covariant derivatives.

\smallskip
(2) Equation \eqref{para-cond} allows us to write $d$ in terms of $a,b,c$. In fact, we find
\[
d=(a+c+d)-(a+c)=-4\alpha-(a+c)
=-(a+c)(4a+1)+4b^2.
\]
Thus, the \emph{paracontact metric structures described in the above Theorem~\ref{th-paracontact} depend on three parameters $a,b,c$} (satisfying conditions \eqref{para-cond}).
\end{Rk}

\begin{Def}
A \emph{$g$-natural paracontact metric structure} on $T_1 M$ is any paracontact metric structure $(\widetilde\varphi,\widetilde\xi,\widetilde\eta, \widetilde G)$ described by equations~\eqref{g-tilde2} and
\eqref{xi}-\eqref{phi}.
\end{Def}

We will now consider $\mathcal D$-homothetic deformations of a $g$-natural paracontact metric structure. Given a paracontact metric structure $(\varphi,\xi,\eta,g)$ and a real constant $t\neq 0$, the \emph{${\mathcal D}_t$-homothetic deformation} of $(\varphi,\xi,\eta,g)$ \cite{zamkovoy} is the new paracontact metric structure $(\varphi _t,\xi _t,\eta _t,g_t)$, described by
\begin{equation}\label{Dhom}
g_t=t g +t(t-1) \eta \otimes \eta, \quad \eta_t=t \eta, \quad \varphi_t=\varphi, \quad \xi_t=\frac{1}{t}\xi.
\end{equation}
Consider now an arbitrary $g$-natural paracontact metric structure $(\widetilde\varphi,\widetilde\xi,\widetilde\eta, \widetilde G)$. If we apply a $\mathcal D$-homothetic deformation to $(\widetilde\varphi,\widetilde\xi,\widetilde\eta, \widetilde G)$, we obtain the paracontact metric structure $(\widetilde\varphi _t,\widetilde\xi _t,\widetilde\eta _t, \widetilde G_t)$, which is again $g$-natural.

In fact, it is easy to check that equations~\eqref{g-tilde2} and
\eqref{xi}-\eqref{para-cond} hold for $(\widetilde\varphi _t,\widetilde\xi _t,\widetilde\eta _t, \widetilde G_t)$, taking $\rho_t=\frac{\rho}{t}$, $a_t=ta$, $b_t=tb$, $c_ t=tc$ and $d_t=t(d+\frac{t-1}{\rho^2})$. Therefore, we proved the following result.

\begin{Th}\label{th-Dhom}
The class of $g$-natural paracontact metric structures $\left\{(\widetilde\varphi,\widetilde\xi,\widetilde\eta, \widetilde G)\right\}$ is invariant under $\mathcal D$-homothetic deformations.
\end{Th}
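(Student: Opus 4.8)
The plan is to certify the deformed structure as $g$-natural by exhibiting explicit constants for which it fits the templates \eqref{g-tilde2} and \eqref{xi}--\eqref{para-cond}. Concretely I would set
\[
\rho_t=\frac{\rho}{t},\quad a_t=ta,\quad b_t=tb,\quad c_t=tc,\quad d_t=t\Big(d+\tfrac{t-1}{\rho^2}\Big),
\]
and check the requirements one at a time. The one preliminary subtlety to dispose of is that the tangential lift is metric-dependent: since $u^{t_G}=\frac{b}{a+c+d}u^h$, the lift operation itself changes under the deformation, so one cannot compare the two structures coefficient-by-coefficient in their own tangential frames. The remedy is to fix an orthonormal frame $\{e_0=u,e_1,\dots,e_n\}$ at $x$ and work throughout in the metric-independent basis $\{u^h,e_1^h,e_1^{t_G},\dots,e_n^h,e_n^{t_G}\}$ of $(T_1M)_{(x,u)}$, using the fact recorded after \eqref{conv} that $e_i^{t_G}=e_i^v$ for $i\geq1$ (as $e_i\perp u$). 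On this common basis the deformed and undeformed data can be read off directly.

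First, \eqref{Dhom} gives $\widetilde\xi_t=\frac1t\widetilde\xi=\frac{\rho}{t}u^h=\rho_t u^h$, which is \eqref{xi} (with $\rho_t>0$ when $t>0$; the case $t<0$ is identical up to orientation). Next, writing $\widetilde G_t=t\widetilde G+t(t-1)\widetilde\eta\otimes\widetilde\eta$ and evaluating on the common basis via \eqref{g-tilde2} and \eqref{eta}: since $\widetilde\eta(e_i^h)=\widetilde\eta(e_i^{t_G})=0$ for $i\geq1$, the $\widetilde\eta\otimes\widetilde\eta$ term only contributes to the $u^h$--$u^h$ entry, where $\widetilde\eta(u^h)=1/\rho$. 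Evaluating on $e_i^h,e_j^h$, on $e_i^h,e_j^{t_G}$, and on $e_i^{t_G},e_j^{t_G}$ gives $a_t+c_t=t(a+c)$, $b_t=tb$, and $a_t=ta$, while the $u^h$--$u^h$ entry yields $a_t+c_t+d_t=t(a+c+d)+t(t-1)/\rho^2$, whence $d_t=t(d+(t-1)/\rho^2)$. Thus $\widetilde G_t$ is $g$-natural with the stated constants, and $\alpha_t=a_t(a_t+c_t)-b_t^2=t^2\alpha$.

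Now a $\mathcal D_t$-homothetic deformation of a paracontact metric structure is again paracontact metric \cite{zamkovoy}, so $(\widetilde\varphi_t,\widetilde\xi_t,\widetilde\eta_t,\widetilde G_t)$ is a paracontact metric structure; in particular $\widetilde\eta_t=\widetilde G_t(\cdot,\widetilde\xi_t)$ and $\widetilde\varphi_t$ is then forced by $\widetilde G_t(\cdot,\widetilde\varphi_t\cdot)=d\widetilde\eta_t$ together with the nondegeneracy of $\widetilde G_t$. Hence it remains only to verify the compatibility relation \eqref{para-cond} for the new constants, for then Theorem~\ref{th-paracontact}, applied to $\widetilde G_t$ and $\widetilde\xi_t=\rho_t u^h$, produces a $g$-natural paracontact metric structure with the same associated metric and Reeb field, which must therefore coincide with the deformed one. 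The verification is a one-line substitution using $a+c+d=1/\rho^2$ and $-4\alpha=1/\rho^2$:
\[
a_t+c_t+d_t=t(a+c+d)+\tfrac{t(t-1)}{\rho^2}=\tfrac{t^2}{\rho^2}=\tfrac{1}{\rho_t^2},\qquad -4\alpha_t=-4t^2\alpha=\tfrac{t^2}{\rho^2}=\tfrac{1}{\rho_t^2}.
\]

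I expect the main obstacle to be precisely the bookkeeping flagged in the first paragraph: because the tangential lift depends on $a,b,c,d$, the intrinsic data of the deformed structure is naturally expressed through the lift $t_{G_t}$ rather than $t_G$, and a careless coefficient comparison would go wrong. Anchoring everything to the $\widetilde G$-independent frame $\{e_i^h,e_i^v\}$ with $e_i\perp u$ neutralises this, after which every remaining check is linear. As a free consistency test one may verify that \eqref{eta} and \eqref{phi} with the deformed constants reproduce $\widetilde\eta_t=t\widetilde\eta$ and $\widetilde\varphi_t=\widetilde\varphi$ on vectors orthogonal to $u$; e.g. the prefactor $1/(2\rho_t\alpha_t)=1/(2t\rho\alpha)$ absorbs exactly the factor $t$ introduced by $a_t,b_t,c_t$.
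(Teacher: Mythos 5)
Your proposal is correct and follows essentially the same route as the paper: the paper's proof consists precisely of exhibiting the constants $\rho_t=\rho/t$, $a_t=ta$, $b_t=tb$, $c_t=tc$, $d_t=t(d+\frac{t-1}{\rho^2})$ and observing that \eqref{g-tilde2} and \eqref{xi}--\eqref{para-cond} hold for the deformed structure. Your additional care about the metric-dependence of the tangential lift (neutralised by working in the frame $\{u^h,e_i^h,e_i^v\}$ with $e_i\perp u$) and the appeal to Theorem~\ref{th-paracontact} to pin down $\widetilde\varphi_t$ are sound elaborations of the verification the paper leaves to the reader.
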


\begin{Rk}\label{phiprime}
Consider the $\mathcal D_t$-homothetic deformation $(\widetilde\varphi _t,\widetilde\xi _t,\widetilde\eta _t, \widetilde G_t)$ of a $g$-natural paracontact metric structure $(\widetilde\varphi ,\widetilde\xi ,\widetilde\eta , \widetilde G)$. Since $t \neq 0$ and $a_t=t a, b_t=tb,c_t=tc$, the vanishing (or not vanishing) of these coefficients are properties invariant for $\mathcal D$-homothetic deformations.  Moreover, we have
\[
\phi_t=t^2 \phi +at^2(t-1)/\rho^2.
\]
Therefore,  different behaviours occur, according on whether $a=0$ or $a \neq 0$. In fact:
\begin{itemize}
\item[(i)] If $a=0$, then $\phi = -b^2 <0$ and $\phi _t = t^2 \phi<0$ for any $t\neq0$. Hence, $\phi<0$ remains invariant for $\mathcal D$-homothetic deformations involving a (pseudo-Riemannian) $g$-natural metric $\widetilde G$ with $a=0$.
\vspace{1mm}\item[(ii)]
If $a \neq 0$, then whatever the value of $\phi$, there exists a $\mathcal D$-homothetic deformation of the paracontact metric structure, for which $\phi _t>0$. In fact, if $a>0$ (respectively, $a<0$), then $\phi _t$ goes to $+\infty$ as $t$ goes to $+\infty$ (respectively, to $-\infty$).
\end{itemize}
\end{Rk}

\medskip
We will now investigate the geometry of $g$-natural paracontact metric structures on $T_1 M$. 
We start from the classification of the paraSasakian structures, proving the following result.

\begin{Th}\label{th-parasasakian}
For any $g$-natural paracontact metric structure $(\widetilde\varphi,\widetilde\xi,\widetilde\eta,\widetilde{G})$ on $T_1 M$, constructed from a $g$-natural metric $\widetilde G$ with $a \neq 0$, the following properties are equivalent:
\begin{itemize}
\item[(i)] $(\widetilde\varphi,\widetilde\xi,\widetilde\eta,\widetilde{G})$ is paraSasakian;
\vspace{1mm}\item[(ii)] $(\widetilde\varphi,\widetilde\xi,\widetilde\eta,\widetilde{G})$ is $K$-paracontact;
\vspace{1mm}\item[(iii)]  $b=0$ and the base manifold $(M,\langle,\rangle)$ has constant  sectional curvature $\bar{c}=\frac{a+c}{a} <0$.
\end{itemize}
\end{Th}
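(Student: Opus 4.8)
The plan is to close the cycle of implications (i) $\Rightarrow$ (ii) $\Rightarrow$ (iii) $\Rightarrow$ (i). The first implication is immediate: as recalled after \eqref{parasasakian}, every paraSasakian manifold is $K$-paracontact, so nothing needs to be checked there.

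For (ii) $\Rightarrow$ (iii), I would translate the $K$-paracontact condition into the vanishing of $\widetilde h=\frac{1}{2}L_{\widetilde\xi}\widetilde\varphi$, and compute $\widetilde h$ from the covariant derivative of $\widetilde\xi$. Indeed, by \eqref{h-properties} one has $\widetilde\nabla\widetilde\xi=-\widetilde\varphi+\widetilde\varphi\widetilde h$; since $\widetilde\varphi$ restricts to an involution on $\ker\widetilde\eta$ (because $\widetilde\varphi^2=I-\widetilde\eta\otimes\widetilde\xi$) and $\widetilde\nabla_X\widetilde\xi\in\ker\widetilde\eta$ (as $\widetilde\xi$ is unit), applying $\widetilde\varphi$ gives the clean formula $\widetilde h X=\widetilde\varphi(\widetilde\nabla_X\widetilde\xi)+X$ for every $X\perp\widetilde\xi$. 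So I would compute $\widetilde\nabla_{X^h}\widetilde\xi$ and $\widetilde\nabla_{Y^{t_G}}\widetilde\xi$ with $\widetilde\xi=\rho u^h$, substituting $Y\to u$ in the formulas of Proposition~\ref{lcc-T1M} and carefully retaining the terms coming from the non-constancy of the functions $\langle\cdot,u\rangle$ along $T_1M$ (as flagged in the Remark following Theorem~\ref{th-paracontact}). Composing with $\widetilde\varphi$ from \eqref{phi} and using \eqref{para-cond} then yields $\widetilde h$ explicitly as a fixed linear combination of the identity on $\{u\}^\perp$ and the Jacobi operator $R_uX=R(X,u)u$, with coefficients rational in $a,b,c$. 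Imposing $\widetilde h=0$ splits into two conditions: one forces $b=0$, and the remaining one (using $a\neq0$) forces $R_u$ to be a scalar multiple of the identity on $\{u\}^\perp$, with the scalar pinned to the fixed constant $(a+c)/a$. Since this scalar is genuinely constant (it depends only on the metric constants $a,c$), the relation $R(X,u)u=\frac{a+c}{a}X$ for all unit $u$ and all $X\perp u$ says precisely that every sectional curvature of $(M,\langle,\rangle)$ equals $(a+c)/a$, i.e. $M$ has constant sectional curvature $\bar c=(a+c)/a$ (no Schur argument is needed, since the value is already forced to a constant, and this also covers $n=1$). The inequality $\bar c<0$ is then automatic: with $b=0$ the signature requirement $\alpha=a(a+c)<0$ gives $(a+c)/a<0$.

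For (iii) $\Rightarrow$ (i), I would substitute $b=0$ and $R(X,Y)Z=\bar c(\langle Y,Z\rangle X-\langle X,Z\rangle Y)$ into the connection formulas of Proposition~\ref{lcc-T1M}, which collapse dramatically, and verify the normality criterion \eqref{parasasakian}, namely $(\widetilde\nabla_Z\widetilde\varphi)W=-\widetilde G(Z,W)\widetilde\xi+\widetilde\eta(W)Z$, for $Z,W$ ranging over $\widetilde\xi$ and over horizontal and tangential lifts of vectors orthogonal to $u$. Using the simplified $\widetilde\varphi$ of \eqref{phi} together with \eqref{para-cond} and $\bar c=(a+c)/a$, this reduces to a short checklist of algebraic identities among the constants.

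The main obstacle is the computation of $\widetilde\nabla\widetilde\xi$ (equivalently $\widetilde h$) in the second step: the subtlety is that $\widetilde\xi=\rho u^h$ is the geodesic-flow field, so differentiating it correctly demands the $Y\to u$ substitution while keeping the contributions from $\widetilde\nabla_Z\langle Y,u\rangle\neq0$, after which the resulting Jacobi-operator expression must be disentangled to separate the \emph{$b=0$} condition from the \emph{constant curvature} condition. Once $\widetilde h$ is in hand, the remaining two implications are comparatively mechanical.
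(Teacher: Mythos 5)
Your plan reproduces the paper's proof almost step for step: the same cycle of implications, the same formula $\widetilde h=\widetilde\varphi(\widetilde\nabla\widetilde\xi)+I-\widetilde\eta\otimes\widetilde\xi$ obtained by applying $\widetilde\varphi$ to the first identity in \eqref{h-properties}, the same splitting of $\widetilde h=0$ into a horizontal and a tangential condition yielding $b=0$ and $R_uX=\frac{a+c}{a}X$, the same observation that no Schur-type argument is needed because the eigenvalue is already a universal constant, and the same direct verification of \eqref{parasasakian} for the converse.

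There is, however, one step in your argument that would fail as written: you invoke Proposition~\ref{lcc-T1M} to compute $\widetilde\nabla\widetilde\xi$ (and later the simplified connection for the implication (iii)$\Rightarrow$(i)), but those formulas are only valid when $\phi>0$, as the paper stresses in Section~2 --- for $\phi<0$ some signs change, and $\phi=0$ corresponds to a degenerate metric $G$ on $TM$. A pseudo-Riemannian $g$-natural metric of the signature used here need not satisfy $\phi>0$, so your computation of $\widetilde h$ is not justified in general. The paper closes this gap by first reducing, without loss of generality, to the case $\phi>0$: by Remark~\ref{phiprime}, when $a\neq0$ a suitable $\mathcal D$-homothetic deformation achieves $\phi_t>0$, and all the properties appearing in the statement (being paraSasakian, being $K$-paracontact, the vanishing of $b$, and the constant-curvature condition on the base) are invariant under such deformations. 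This is also part of the reason the hypothesis $a\neq0$ appears in the statement, beyond its use in dividing to isolate $R_u$. Once you insert this reduction at the start of both nontrivial implications, your argument coincides with the paper's.
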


\begin{proof} \lq\lq(i)$\Rightarrow$(ii)\rq\rq: \ It holds in general.

\smallskip
\lq\lq(ii)$\Rightarrow$(iii)\rq\rq:  Consider a $g$-natural paracontact metric manifold $(T_1M, \widetilde\varphi,\widetilde\xi,\widetilde\eta,\widetilde{G})$. Because of Remark~\ref{phiprime} and recalling that the property of being paraSasakian is invariant under $\mathcal D$-homothetic deformations \cite{zamkovoy}, without loss of generality we can assume that $\phi>0$. Then, using the description of the Levi-Civita connection  of a $g$-natural metric $\widetilde{G}$ given in Proposition~\ref{lcc-T1M}, we find
\begin{equation}\label{nabla-x-xi}
\left\{
\begin{aligned}
\widetilde\nabla_{X^h} \widetilde\xi     &=\frac{\rho}{2\alpha}  \{ bd X-ab R_u X \}^h
+ \frac{\rho}{2\alpha}  \{ -(a+c)d X+(b^2-\alpha) R_u X \}^{t_G},\\
\widetilde\nabla_{Y^{t_G}} \widetilde\xi &=\frac{\rho}{2\alpha}  \{ (ad+2\alpha)X-a^2 R_u X \}^h
+ \frac{\rho}{2\alpha}  \{ -bd X+ad R_u X \}^{t_G},
\end{aligned}
\right.
\end{equation}
for all vector fields $X,Y \in M_x$ orthogonal to $u$, where $R_uX=R(X,u)u$ is the Jacobi operator associated to $u$. If $X^h=\widetilde{\xi}$, then $\widetilde\nabla_{\widetilde\xi} \widetilde\xi=0$ (as in any paracontact metric manifold \cite{zamkovoy}).

Applying $\widetilde\varphi$ to the first equation in \eqref{h-properties}, we obtain $\widetilde\varphi (\widetilde\nabla {\widetilde\xi})=-\widetilde\varphi^2 +\widetilde\varphi^2 \widetilde{h}=-I+\widetilde\eta \otimes \widetilde\xi + \widetilde{h}$,  and so
\[
\widetilde{h}=\widetilde\varphi (\widetilde\nabla {\widetilde\xi})+I-\widetilde\eta \otimes \widetilde\xi.
\]
Thus, \eqref{nabla-x-xi} yields
\begin{equation*}
\left\{
\begin{aligned}
\widetilde{h}(X^h)&=\frac{1}{4\alpha}\left\{  -(a+c)X^h+a(R_uX)^h-2b (R_u X)^{t_G}\right\},\\
\widetilde{h}(Y^{t_G})&=\frac{1}{4\alpha} \left\{-2b Y^h+(a+c)Y^{t_G}-a (R_u Y)^{t_G}\right\},
\end{aligned}
\right.
\end{equation*}
for all $X,Y \in M_x$ orthogonal to $u$.

If $X$ is arbitrary, then $X-\eta(X)\xi$ is orthogonal to $u$. Taking into account $h(u^h)=0$ and $R_u (X-\eta(X)u)=R_u X$, we then conclude that $\widetilde{h}$ is completely determined by
\begin{equation}\label{h-explicit}
\left\{
\begin{aligned}
\widetilde{h}(X^h)
&=\frac{1}{4\alpha}\left\{ -(a+c)(X-\langle X, u\rangle u)^h+a(R_uX)^h-2b (R_u X)^{t_G}\right\},\\
\widetilde{h}(Y^{t_G})&=\frac{1}{4\alpha} \left\{ -2b Y^h+(a+c)Y^{t_G}-a (R_u Y)^{t_G}\right\}
\end{aligned}
\right.
\end{equation}
for all $X,Y \in M_x$, with $Y$ orthogonal to $u$.

The paracontact metric structure $(\widetilde\varphi,\widetilde\xi,\widetilde\eta,\widetilde{G})$ is $K$-contact when $\widetilde{h}=0$, that is, by \eqref{h-explicit}, if and only if
\begin{equation*}
\left\{
\begin{aligned}
& -(a+c)X^h+a(R_uX)^h-2b (R_u X)^{t_G} =0,\\
&-2b Y^h+(a+c)Y^{t_G}-a (R_u Y)^{t_G } =0,
\end{aligned}
\right.
\end{equation*}
for all $X,Y \in M_x$ orthogonal to $u$. 
Since the horizontal and vertical lifts are uniquely determined, the above system holds only when both horizontal and tangential parts are zero, whence $b=0$ and $-(a+c)X+a(R_uX)=0$, for all $X\in M_x$ orthogonal to $u$.

Since $b=0$, equation \eqref{para-cond} yields $\alpha=a(a+c)<0$. Hence, $R_u X=\frac{a+c}{a} X$ for all $X$ orthogonal to $u$, where $\frac{a+c}{a}<0$.
Therefore, the Jacobi operator  $R_u$ has just one constant eigenvalue $\frac{a+c}{a}<0$ (besides $0$) and it is well known that this is equivalent to the fact that $(M,\langle ,\rangle )$ has negative constant sectional curvature $\bar{c}=\frac{a+c}{a}$.

\smallskip
\lq\lq(iii)$\Rightarrow$(i)\rq\rq:  We will now assume that $(M,\langle ,\rangle )$ has constant sectional curvature $\bar c$ and consider a $g$-natural paracontact metric structure $(T_1M, \widetilde\varphi,\widetilde\xi,\widetilde\eta,\widetilde{G})$, with $\widetilde G$ satisfying $b=0\neq a$  and $c=(\bar c -1)a$. Since both these conditions are invariant under $\mathcal D$-homothetic deformations, we can again use Remark~\ref{phiprime} to restrict to the case when $\phi >0$ without loss of generality.

We first notice that by \eqref{h-explicit} we now have at once $\widetilde h=0$, that is, $(T_1M, \widetilde\varphi,\widetilde\xi,\widetilde\eta,\widetilde{G})$ is $K$-paracontact. Next, we check that Equation~\eqref{parasasakian} holds for all vector fields $Z,W$ tangent $T_1M$. 

If $Z=W=\w{\xi}$, it follows from \eqref{h-properties} that
\begin{align*}
(\w{\nabla}_{\xi} \w{\varphi})\w{\xi}=0=-\w{G}(\w{\xi},\w{\xi})\w{\xi}+\w{\eta}(\w{\xi}) \w{\xi}.
\end{align*}

If $Z=X^h$ or $Z=X^{t_G}$, where $X$ is orthogonal to $u$, and $W=\w{\xi}$, then applying again \eqref{h-properties} (and taking into account $\w{h}=0$), we find
\begin{align*}
(\w{\nabla}_{Z} \w{\varphi})\w{\xi}=\w{\varphi}^2 Z=Z=\w{G}(Z,\w{\xi})\w{\xi}+\w{\eta}(\w{\xi})Z.
\end{align*}
Finally, a direct calculation shows that \eqref{parasasakian} also holds when taking $Z,W= X^h, Y^{t_G}$, with $X,Y$ orthogonal to $u$, which ends the proof.
\end{proof}

\medskip

The second equation in \eqref{h-properties} (together with the paracontact metric condition $\Phi=d\eta$) implies that the tensor $h$ of any paracontact metric structure $(\varphi,\xi,\eta,g)$ is self-adjoint with respect to $g$, just like in the contact metric case. However, since now $g$ is pseudo-Riemannian, the fact that $h$ is self-adjoint does not imply that $h$ admits an orthonormal basis of eigenvectors. Indeed, it may happen that $h^2=0$ although $h \neq 0$.

We explicitly remark that if $(\varphi _t,\xi _t,\eta _t,g_t)$ is the $\mathcal D_t$-homothetic deformation of $(\varphi,\xi,\eta,g)$, then we have
\[
h_t = \frac 12 \mathcal L _{\xi _t} \varphi _t =\frac{1}{2t} \mathcal L _{\xi } \varphi  =\frac 1t h,
\]
from which it follows at once that \emph{the conditions $h^2 =0\neq h$ are  invariant under $\mathcal D$-homothetic deformations}.
We will now classify $g$-natural paracontact metric structures
with $\widetilde{h}^2=0 \neq \widetilde{h}$, which do not have any contact metric counterpart, since in the contact metric case $h^2=0$ is equivalent to $h=0$, due to the diagonalisability of $h$. Since the existence of these structures is related to the base manifold being Ossermann, we briefly report some information on Osserman manifolds and rank-one symmetric spaces.

\emph{Rank-one symmetric spaces} are  $\mathbb R \mathbb P ^n$, $\mathbb  S^n$, $\mathbb C \mathbb P^n$, $\mathbb H \mathbb  P^n$, $Cay \mathbb P^2$ and their non-compact duals. In the cases of constant sectional curvature there exists just one eigenvalue for the Jacobi operator, and two  eigenvalues $\lambda_1$ and $\lambda_2=4\lambda_1$ in the remaining cases (see for example \cite{BoPV}). In all cases, the eigenvalues of $R_u$ have the same sign: positive in the compact cases, and negative for the non-compact ones.

A Riemannian manifold $(M,\langle,\rangle)$ is called \emph{globally Osserman} if the
eigenvalues of $R_u$ are independent of both the unit tangent
vector $u \in M_x$ and the point $x \in M$. The well-known Osserman conjecture states
that any globally Osserman manifold is locally isometric to a \emph{two-point homogeneous
space}, that is, either a flat space or a rank-one symmetric space.

The Osserman conjecture has been proved in any dimension $n \neq 16$ (\cite{Ch},\cite{N1},\cite{N2}). Moreover, also in dimension $16$, if
$R_u$ admits at most two distinct  constant eigenvalues (besides 0), then the Riemannian manifold $(M,\langle,\rangle)$ is locally isometric to a two-point homogeneous space \cite{N3}.

We are now ready to prove the following result.

\begin{Th}\label{h2=0}
A $g$-natural paracontact metric structure $(\widetilde\varphi,\widetilde\xi,\widetilde\eta,\widetilde{G})$, constructed from a $g$-natural metric $\widetilde G$ with $a \neq 0$, satisfies $\widetilde{h}^2 =0 \neq \widetilde{h}$ if and only if $(M,\langle,\rangle)$ is locally isometric to a non-compact rank-one symmetric space of non-constant sectional curvature and either $\alpha =-b^2/9$, or $\alpha=-9 b^2$.
\end{Th}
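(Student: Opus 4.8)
The plan is to read off $\widetilde h$ from \eqref{h-explicit} as a block operator built from the Jacobi operator, and to translate the nilpotency condition $\widetilde h^2=0\neq\widetilde h$ into a single polynomial identity satisfied by $R_u$. Since $\widetilde h(\widetilde\xi)=0$ and $\widetilde h$ preserves $\ker\widetilde\eta$, and since $R_u$ preserves $\{u\}^\perp$, it suffices to study $\widetilde h$ on the two copies of $\{u\}^\perp$ spanned by the horizontal and tangential lifts. First I would fix a point $(x,u)$ and, writing $P:=R_u|_{\{u\}^\perp}$ and $A:=-(a+c)I+aP$, record from \eqref{h-explicit} that
\[
4\alpha\,\widetilde h=\begin{pmatrix} A & -2bI\\ -2bP & -A\end{pmatrix}
\]
with respect to the splitting $X^h\oplus Y^{t_G}$ (with $X,Y\perp u$). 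Because $A$ is a polynomial in $P$, it commutes with $P$, so the off-diagonal blocks of $(4\alpha)^2\widetilde h^2$ vanish identically and both diagonal blocks equal $A^2+4b^2P$.

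The next step is therefore purely algebraic: $\widetilde h^2=0$ if and only if $A^2+4b^2P=0$ on $\{u\}^\perp$, i.e.
\[
a^2 R_u^2+2(b^2-\alpha)R_u+(a+c)^2 I=0\qquad\text{on }\{u\}^\perp,
\]
where I have used $a(a+c)=\alpha+b^2$. Call the corresponding quadratic $q(\lambda)=a^2\lambda^2+2(b^2-\alpha)\lambda+(a+c)^2$. Since $a\neq0$, $q$ has degree two, so the identity $q(R_u)=0$ forces every eigenvalue of every $R_u$ to be a root of the fixed polynomial $q$; in particular the eigenvalues of $R_u$ are at most two and are independent of $u$ and of $x$. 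Thus $(M,\langle,\rangle)$ is globally Osserman with at most two eigenvalues, and the Osserman results quoted above (valid in every dimension once the number of eigenvalues is at most two) imply that $M$ is locally isometric to a two-point homogeneous space.

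It then remains to run the case analysis on the eigenvalues of $R_u$ and to prove the converse. In the genuinely non-constant case the Jacobi operator has exactly two eigenvalues $\lambda_1$ and $\lambda_2=4\lambda_1$, and both must be roots of $q$; Vieta's formulas $\lambda_1\lambda_2=(a+c)^2/a^2$ and $\lambda_1+\lambda_2=-2(b^2-\alpha)/a^2$ give $\lambda_1=\pm(a+c)/(2a)$ and then, after substituting $a(a+c)=\alpha+b^2$, exactly the two alternatives $\alpha=-9b^2$ (for $\lambda_1=(a+c)/(2a)$) and $\alpha=-b^2/9$ (for $\lambda_1=-(a+c)/(2a)$). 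In both alternatives one checks $\lambda_1<0$, so the two eigenvalues are negative and $M$ is a non-compact rank-one symmetric space of non-constant curvature; moreover $b\neq0$ (since $\alpha<0$), which guarantees $\widetilde h\neq0$. The converse is immediate: on such a space, feeding the eigenvalues $\lambda_1,4\lambda_1$ back into $q$ shows $q(R_u)=0$, hence $\widetilde h^2=0$, while $b\neq0$ gives $\widetilde h\neq0$. The delicate point I expect to be the main obstacle is disposing of the single-eigenvalue case (flat or constant sectional curvature), where $q$ need only possess one root equal to the unique eigenvalue; here one must argue directly from \eqref{h-explicit}, together with the standing sign constraints $\alpha<0$ and $a+c+d=-4\alpha$ from \eqref{para-cond}, that this situation is not compatible with $\widetilde h^2=0\neq\widetilde h$ under the theorem's hypotheses, so that it is exactly the two-point homogeneity input, and not mere algebra, that pins down the two listed families.
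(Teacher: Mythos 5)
Your route is the same as the paper's: read $\widetilde h$ off \eqref{h-explicit}, observe that $\widetilde h^2$ is block-diagonal with both blocks equal to $q(R_u)$ for the quadratic $q(\lambda)=a^2\lambda^2+2(b^2-\alpha)\lambda+(a+c)^2$, deduce that $M$ is Osserman with at most two Jacobi eigenvalues, and then impose the ratio $\lambda_2=4\lambda_1$. Your Vieta computation gives exactly the paper's two alternatives (the paper instead solves $q$ explicitly, finds $\lambda_\pm=-(\sqrt{-\alpha}\mp|b|)^2/a^2<0$, and reduces $\lambda_-=4\lambda_+$ to $9\alpha^2+82b^2\alpha+9b^4=0$, whose roots are $\alpha=-b^2/9$ and $\alpha=-9b^2$). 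One small slip: $b\neq 0$ does not follow from $\alpha<0$; it follows because two \emph{distinct} eigenvalues force the discriminant $-4\alpha b^2$ of $q$ to be strictly positive, which with $\alpha<0$ gives $b\neq0$.

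The ``delicate point'' you isolate at the end is real, and it is exactly the point where the paper's own proof is not satisfactory. After noting that for $b\neq0$ the quadratic $q$ has two distinct roots, the paper concludes in one line ``therefore $(M,\langle,\rangle)$ does not have constant sectional curvature.'' This is a non sequitur: $q(R_u)=0$ only forces the eigenvalues of $R_u$ to lie among the roots of $q$, not that both roots occur, so the case $R_u=\bar c\,I$ on $\{u\}^\perp$ with $\bar c$ equal to a single root of $q$ is not excluded. Worse, this case appears to be genuinely realizable: take $M$ flat and $a+c=0$, $b\neq0$, $a>1/4$ (admissible, since then $\alpha=-b^2<0$, $d=4b^2>0$ and $\phi=b^2(4a-1)>0$, so \eqref{para-cond} holds and \eqref{h-explicit} applies without deformation). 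Then $R_u=0$, and \eqref{h-explicit} gives $\widetilde h(X^h)=0$ and $\widetilde h(Y^{t_G})=-\tfrac{b}{2\alpha}\,Y^h$, whence $\widetilde h^2=0\neq\widetilde h$ over a base of constant (zero) curvature. So the obstacle you flagged cannot be ``argued away directly from \eqref{h-explicit} and the sign constraints'' as you hope: the ``only if'' direction of the statement needs an extra hypothesis excluding the constant-curvature roots of $q$ (equivalently, excluding $\kappa=-1$ in the constant-curvature case of Theorem~\ref{th-kappamu}), and any complete proof must either add that hypothesis or treat these examples separately. Your instinct to single this step out, rather than pass over it, is the correct one.
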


\begin{proof}
Since the condition $h^2 =0 \neq h$ is invariant under $\mathcal D$-homothetic deformations, by Remark~\ref{phiprime} it is enough to consider the case when $\widetilde G$ satisfies $\phi >0$. So, using the description of $\widetilde h$ given in Equation \eqref{h-explicit}, we have
\begin{equation*}
\left\{
\begin{aligned}
\widetilde{h}^2(X^h)     &=\frac{1}{16\alpha^2} \{ a^2 R_u (R_u X)-2(a(a+c)-2b^2)R_u X+(a+c)^2 X \}^h,\\
\widetilde{h}^2(Y^{t_G}) &=\frac{1}{16\alpha^2} \{ a^2 R_u (R_u Y)-2(a(a+c)-2b^2)R_u Y+(a+c)^2 Y \}^{t_G},
\end{aligned}
\right.
\end{equation*}
for all $X,Y \in M_x$ orthogonal to $u$. Therefore, $\widetilde{h}^2=0$ if and only if
\begin{equation*}
a^2 R_u (R_uX)-2(a(a+c)-2b^2)R_u X+(a+c)^2 X =0,
\end{equation*}
for all $X \in M_x$ orthogonal to $u$.

We now consider an eigenvector $X \neq 0$ of the Jacobi operator $R_u $, associated to the eigenvalue $\lambda$. Then, the above equation yields
\[
a^2 \lambda^2-2(a(a+c)-2b^2)\lambda+(a+c)^2=0
\]
and so, $R_u$ has at most two constant eigenvalues, explicitly given by 
\begin{equation}\label{eigenvalues}
\lambda=\frac{\alpha-b^2\pm\sqrt{-4\alpha b^2}}{a^2},
\end{equation}
where we took into account the definition of  $\alpha$.


Since $\alpha<0$, Equation~\eqref{eigenvalues} admits one or two real solutions, depending on whether $b=0$ or $b\neq0$.

If $b=0$, then  the only eigenvalue is $\lambda=\frac{a+c}{a}$, so $(M,\langle,\rangle)$ has constant sectional curvature $\frac{a+c}{a}<0$. Thus, by Theorem~\ref{th-parasasakian} we have that $\widetilde{h}=0$ and $(T_1M, \widetilde\varphi,\widetilde\xi,\widetilde\eta,\widetilde{G})$ is paraSasakian.

On the other hand, if $b\neq0$, then there exist two distinct solutions, which can be written as
\[
\lambda=\frac{\alpha-b^2\pm\sqrt{-4\alpha b^2}}{a^2}=\frac{\alpha-b^2\pm2|b|\sqrt{-\alpha}}{a^2}=-\frac{(\sqrt{-\alpha}\pm |b|)^2}{a^2}<0.
\]
Therefore, $(M,\langle,\rangle)$ does not have constant sectional curvature. Since the Jacobi operator $R_u$ has two distinct eigenvalues, the same ones for any unit vector $u$ and at each point, the base manifold is locally isometric to a rank-one symmetric space (non-compact, as $\lambda_i<0$). Requiring that one of these eigenvalues is four times the other, we get
\[
4 \cdot \frac{\alpha-b^2+\sqrt{-4\alpha b^2}}{a^2}=\frac{\alpha-b^2-\sqrt{-4\alpha b^2}}{a^2},
\]
that is, $9\alpha^2+82b^2\alpha+9b^4=0$, whose solutions are either $\alpha=-\frac{1}{9} b^2<0$, or $\alpha=-9b^2<0$ and this ends the proof. Notice that, since $\alpha=a(a+c)-b^2$, we respectively get $a+c+d=-4\alpha=\frac{4}{9}b^2>0$ or $36b^2>0$, compatibly with condition~\eqref{para-cond}.
\end{proof}

We end this section with the following consequence of Theorems~\ref{th-parasasakian} and \ref{h2=0}.

\begin{Cor}
(1) For every manifold $(M,\langle,\rangle)$ of constant sectional curvature $\bar c <0$, there exists a one-parameter family of $g$-natural paraSasakian structures $(\widetilde\varphi,\widetilde\xi,\widetilde\eta,\widetilde{G})$ on $T_1 M$: the ones described by \eqref{xi}-\eqref{para-cond}, constructed from any $g$-natural metric given by \eqref{g-tilde2}, with
\[
a \neq 0, \quad  b=0, \quad c=(\bar c -1)a, \quad d=-\bar c a(4a+1).
\]

\smallskip
(2) For every non-compact rank-one symmetric space $(M,\langle,\rangle)$ of non-constant sectional curvature, there exist two two-parameter families of $g$-natural paracontact metric structures $(\widetilde\varphi,\widetilde\xi,\widetilde\eta,\widetilde{G})$ on $T_1 M$, satisfying $\widetilde h ^2 =0 \neq \widetilde h$: the ones described by \eqref{xi}-\eqref{para-cond}, constructed from any $g$-natural metric given by \eqref{g-tilde2}, with
\[
a \neq 0, \quad  b\neq 0, \quad d=-4 \alpha -(a+c), \quad \alpha =a(a+c)-b^2= -\frac{1}{9} b^2 \; {\rm or} \; -9b^2.
\]
\end{Cor}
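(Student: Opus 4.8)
The plan is to derive both parts directly from the two preceding classification results, Theorems~\ref{th-parasasakian} and \ref{h2=0}; the only genuine work is to solve the eigenvalue conditions appearing there for the coefficients $c$ and $d$, and to check that the resulting constants satisfy the signature constraints~\eqref{para-cond} (equivalently \eqref{cond-neutral-T1M}) under which a $g$-natural paracontact metric structure is defined. In each case I would first record that $d$ is not a free parameter but is forced by the normalisation $a+c+d=-4\alpha$.

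For part~(1) I would start from Theorem~\ref{th-parasasakian}: for $a\neq 0$ the structure is paraSasakian exactly when $b=0$ and $(M,\langle,\rangle)$ has constant sectional curvature $\bar c=(a+c)/a<0$. Solving $\bar c=(a+c)/a$ gives $c=(\bar c-1)a$, and putting $b=0$ into the definition of $\alpha$ yields $\alpha=a(a+c)=\bar c\,a^2$. Then \eqref{para-cond} determines $d=-4\alpha-(a+c)=-4\bar c a^2-\bar c a=-\bar c\,a(4a+1)$, which is the announced formula. Since $\bar c<0$ one has $\alpha=\bar c a^2<0$ and $a+c+d=-4\alpha=-4\bar c a^2>0$, so the admissibility conditions hold for every $a\neq 0$; thus $a$ is a genuine free parameter and we obtain the stated one-parameter family.

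For part~(2) I would invoke Theorem~\ref{h2=0}: for $a\neq 0$ the condition $\widetilde h^2=0\neq\widetilde h$ holds precisely when $(M,\langle,\rangle)$ is locally isometric to a non-compact rank-one symmetric space of non-constant sectional curvature and $\alpha$ equals either $-\tfrac19 b^2$ or $-9b^2$; in particular $b\neq 0$, as $b=0$ would force constant curvature and $\widetilde h=0$ by Theorem~\ref{th-parasasakian}. I would then treat the two values of $\alpha$ separately, in each case solving $\alpha=a(a+c)-b^2$ for $c$. From $\alpha=-\tfrac19 b^2$ one gets $a(a+c)=\tfrac89 b^2$, i.e. $c=\tfrac{8b^2}{9a}-a$, while from $\alpha=-9b^2$ one gets $a(a+c)=-8b^2$, i.e. $c=-\tfrac{8b^2}{a}-a$; in both cases $d$ is then fixed by $d=-4\alpha-(a+c)$, exactly as in the statement. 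Hence, for each admissible value of $\alpha$, the constants $a\neq 0$ and $b\neq 0$ may be chosen freely, $c$ and $d$ being determined, which produces the two two-parameter families.

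It remains to confirm that these choices define bona fide pseudo-Riemannian $g$-natural paracontact structures. Both values $\alpha=-\tfrac19 b^2$ and $\alpha=-9b^2$ are negative since $b\neq 0$, and correspondingly $a+c+d=-4\alpha=\tfrac49 b^2>0$ or $36b^2>0$, so \eqref{para-cond} is satisfied throughout each family, matching the compatibility already noted at the end of the proof of Theorem~\ref{h2=0}. The point requiring most care is the compatibility between the algebraic condition on $\alpha$ and the geometry of the base, but this is precisely what Theorem~\ref{h2=0} packages: there the two roots of the characteristic equation for the Jacobi operator $R_u$ are shown to stand in the ratio $4:1$ exactly when $\alpha\in\{-\tfrac19 b^2,-9b^2\}$, matching the eigenvalue structure of a rank-one symmetric space of non-constant curvature. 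Once this is granted, both assertions of the corollary follow at once from the two classification theorems.
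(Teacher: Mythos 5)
Your proposal is correct and follows exactly the route the paper intends: the corollary is stated there without proof as an immediate consequence of Theorems~\ref{th-parasasakian} and \ref{h2=0}, and your derivation of $c$ and $d$ from $\bar c=(a+c)/a$, $\alpha=a(a+c)-b^2$ and the normalisation $a+c+d=-4\alpha$, together with the verification of \eqref{para-cond} and the parameter count, supplies precisely the omitted details. Nothing is missing.
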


\section{Paracontact $g$-natural $(\kappa,\mu)$-spaces}\label{sec-kappamu}

We will now consider the remarkable class of \emph{paracontact metric $(\kappa,\mu)$-spaces}, which are paracontact metric manifolds $(M,\varphi,\xi,\eta,g)$ satisfying the condition
\begin{equation} \label{kappamu}
{R}(Z,W){\xi}=\kappa ({\eta}(W)Z-{\eta}(Z)W)+\mu({\eta}(W){h}Z-{\eta}(Z){h}W),
\end{equation}
for all vector fields $Z,W$ on $M$, where  $\kappa$ and $\mu$ are real constants. These manifolds are a natural generalisation of both the paracontact metric manifolds satisfying ${R}(X,Y){\xi}=0$  and of the paraSasakian ones.

Their analogue in contact metric geometry (namely, contact metric $(\kappa,\mu)$-spaces)  where first introduced and studied in \cite{BKP}. Much more recently, paracontact metric $(\kappa,\mu)$-spaces have been studied in \cite{CKM}. In particular, we report the following result.

\begin{Lem}[\cite{CKM}]
If $(M,\varphi,\xi,\eta,g)$ is a paracontact metric $(\kappa,\mu)$-space, then
\begin{equation}\label{h-kappamu}
 {h}^2=(1 + \kappa){\varphi}^2.
\end{equation}
Moreover, when $\kappa\neq -1$ the following identity holds:
\begin{equation}\label{nabla-kappamu}
({\nabla}_{Z} {\varphi}){W}=-g(Z-hZ,W)\xi+\eta(W)(Z-hZ),
\end{equation}
for all vector fields $Z,W$ on $M$.
\end{Lem}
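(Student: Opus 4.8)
The plan is to treat the two assertions separately: \eqref{h-kappamu} follows from algebraically massaging the defining condition \eqref{kappamu} against a universal curvature identity, and \eqref{nabla-kappamu} is then bootstrapped from \eqref{h-kappamu} once $\kappa\neq-1$ is invoked.

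First I would prove \eqref{h-kappamu}. Setting $W=\xi$ in \eqref{kappamu} and using $\eta(\xi)=1$, $h\xi=0$ and $\varphi^2=I-\eta\otimes\xi$ from \eqref{h-properties}, the Jacobi operator along $\xi$ becomes
\[
R(Z,\xi)\xi=\kappa\,\varphi^2 Z+\mu\,hZ.
\]
Replacing $Z$ by $\varphi Z$, applying $\varphi$ and exploiting the anticommutation $h\varphi=-\varphi h$ together with $\eta\circ h=0=\eta\circ\varphi$ (so that $\varphi^2 h=h$ and $\varphi^3=\varphi$) yields the conjugate relation $\varphi R(\varphi Z,\xi)\xi=\kappa\,\varphi^2 Z-\mu\,hZ$; summing the two cancels the $\mu$-term and leaves $R(Z,\xi)\xi+\varphi R(\varphi Z,\xi)\xi=2\kappa\,\varphi^2 Z$. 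The decisive ingredient is a \emph{universal} identity, valid in every paracontact metric manifold,
\[
R(Z,\xi)\xi+\varphi R(\varphi Z,\xi)\xi=2\,(h^2-\varphi^2)Z,
\]
which I would establish by expanding $R(Z,\xi)\xi$ directly from $\nabla_Z\xi=-\varphi Z+\varphi hZ$ and $\nabla_\xi\xi=0$ (both from \eqref{h-properties}), carrying along $h\varphi=-\varphi h$ and the self-adjointness of $h$. Comparing the two expressions for $R(Z,\xi)\xi+\varphi R(\varphi Z,\xi)\xi$ forces $h^2-\varphi^2=\kappa\varphi^2$, i.e. \eqref{h-kappamu}. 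As a consistency check, the paraSasakian case $h=0$ gives $R(Z,\xi)\xi=-\varphi^2 Z$ and hence $\kappa=-1$, explaining why $\kappa\neq-1$ is the genuinely non-paraSasakian regime.

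For the second assertion I would start from the general first-order formula for $\nabla\varphi$ available in any paracontact metric manifold, expressing $g((\nabla_Z\varphi)W,V)$ through the fundamental $2$-form $\Phi=d\eta$, the tensor $h$, and the normality tensor $N_\varphi$. Feeding in \eqref{kappamu} and \eqref{h-kappamu} and using the hypothesis $\kappa\neq-1$ — which, since $\varphi^2=I$ on $\ker\eta$, makes $h^2=(1+\kappa)\,I$ there and so renders $h$ \emph{invertible} on $\ker\eta$ — one can solve for and eliminate the normality contribution, collapsing the expression to $(\nabla_Z\varphi)W=-g(Z-hZ,W)\xi+\eta(W)(Z-hZ)$. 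This reduces to \eqref{parasasakian} when $h=0$, as it must.

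The main obstacle is the universal curvature identity of the first part: obtaining $R(Z,\xi)\xi+\varphi R(\varphi Z,\xi)\xi=2(h^2-\varphi^2)Z$ hinges on a careful evaluation of $\nabla_\xi h$ (equivalently, the covariant derivatives of the structure tensors along $\xi$, together with $\nabla_\xi\varphi=0$), which feeds into the $\nabla_\xi\nabla_Z\xi$ and $\nabla_{[Z,\xi]}\xi$ terms of $R(\cdot,\xi)\xi$; everything else is algebraic book-keeping with \eqref{compatibile} and \eqref{h-properties}. In the second part the delicate point is precisely the role of $\kappa\neq-1$: it is exactly the nondegeneracy of $h$ on $\ker\eta$ that permits the normality term to be inverted away, which is why \eqref{nabla-kappamu} is not claimed for $\kappa=-1$.
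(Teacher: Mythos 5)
A preliminary remark: the paper does not prove this Lemma but imports it from \cite{CKM}, so your attempt has to be judged against the proof there (which follows the contact-metric template of \cite{BKP}). Your argument for \eqref{h-kappamu} is essentially that proof and is correct. Putting $W=\xi$ in \eqref{kappamu} gives $R(Z,\xi)\xi=\kappa\varphi^{2}Z+\mu hZ$; the conjugate relation $\varphi R(\varphi Z,\xi)\xi=\kappa\varphi^{2}Z-\mu hZ$ follows from $h\varphi=-\varphi h$, $\eta\circ h=0$ and $\varphi^{3}=\varphi$; and your ``universal'' identity does hold on every paracontact metric manifold: writing $A:=\nabla\xi=-\varphi+\varphi h$ one computes $R(Z,\xi)\xi=-(\nabla_{\xi}A)Z-A^{2}Z$ with $A^{2}=\varphi^{2}-h^{2}$, and the $\nabla_{\xi}A$ contributions cancel in the combination $R(Z,\xi)\xi+\varphi R(\varphi Z,\xi)\xi$ because $\nabla_{\xi}\varphi=0$ (Zamkovoy) forces $(\nabla_{\xi}h)\varphi=-\varphi(\nabla_{\xi}h)$. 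Comparing the two expressions gives $h^{2}=(1+\kappa)\varphi^{2}$, and your consistency check ($h=0\Rightarrow\kappa=-1$) is right.

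The proof of \eqref{nabla-kappamu}, however, has a genuine gap. The nullity condition \eqref{kappamu} and the algebraic identity \eqref{h-kappamu} are zeroth-order in $h$, whereas the normality tensor $N_{\varphi}$ appearing in the general formula for $g((\nabla_{Z}\varphi)W,V)$ involves first derivatives of $\varphi$; you cannot ``feed in'' \eqref{kappamu} and then ``solve for and eliminate'' $N_{\varphi}$ without first producing differential identities for $h$. In \cite{CKM} (as in \cite{BKP}) this is the bulk of the work: one covariantly differentiates the nullity condition, uses the symmetries of $R$ (in particular $g(R(Z,W)V,\xi)=-g(R(Z,W)\xi,V)$, which transfers \eqref{kappamu} to information on $R(\cdot,\cdot)V$) to obtain $\nabla_{\xi}h$ in terms of $\mu h\varphi$ and a closed formula for the antisymmetrised derivative $(\nabla_{Z}h)W-(\nabla_{W}h)Z$, and only then does the $N_{\varphi}$-contribution collapse to the $hZ$-terms of \eqref{nabla-kappamu}. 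The hypothesis $\kappa\neq-1$ enters there as a division by $1+\kappa$ (equivalently, inverting $h$ on $\ker\eta$ via $h^{2}=(1+\kappa)\varphi^{2}$); your observation that $h$ is invertible on $\ker\eta$ correctly locates where the hypothesis is used, but invertibility of $h$ by itself does not ``invert away'' the normality term, and as written your second step is a plan with the decisive computation missing.
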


Equation \eqref{h-kappamu} means that $\kappa=-1$ if and only if ${h}^2=0$ (which, as already remarked in  the previous section, does not imply ${h}=0$).

As already proved in \cite{CKM}, if $(\varphi _t,\xi _t,\eta _t,g_t)$ is the $D_t$-homothetic deformation of a paracontact metric $(\varphi,\xi,\eta,g)$, then $(\varphi _t,\xi _t,\eta _t,g_t)$ is a $(\kappa,\mu)$-space if and only if so is if $(\varphi ,\xi ,\eta ,g)$. In particular, one has \cite{CKM}
\[
\kappa _t =\frac{k+1-t^2}{t^2}, \qquad \mu _t= \frac{\mu+2t-2}{t},
\]
so that $\kappa _t=-1$ if and only if $k=-1$. We are now ready  to prove the following result.


\begin{Th}\label{th-kappamu}
Consider any $g$-natural paracontact metric structure $(\widetilde\varphi,\widetilde\xi,\widetilde\eta,\widetilde{G})$ on $T_1 M$, constructed from a $g$-natural metric $\widetilde G$ with $a \neq 0$. Then:
\begin{itemize}
\item[(a)]
If $(T_1 M,\widetilde\varphi,\widetilde\xi,\widetilde\eta,\widetilde{G})$ is a $(\kappa,\mu)$-space with $\kappa\neq -1$, then $(M,\langle,\rangle)$ is of constant sectional curvature $\bar{c}$.

\item[(b)]
If $(M,\langle,\rangle)$ is of constant sectional curvature $\bar{c}$, then $(T_1 M,\widetilde\varphi,\widetilde\xi,\widetilde\eta,\widetilde{G})$ is a paracontact $(\kappa,\mu)$-space. Moreover, if it is not paraSasakian, then
\begin{equation}\label{values}
\kappa=\frac{1}{16\alpha^2} ( a^2\bar{c}^2 -2(\alpha-b^2)\bar{c}-d(2(a+c)+d)) \neq -1, \quad \mu=\frac{1}{2\alpha} (a \bar{c}-d).
\end{equation}
\end{itemize}
\end{Th}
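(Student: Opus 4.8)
The plan is to reduce everything to the curvature formulas of Proposition~\ref{riem-curv-T1M}, specialised to the characteristic vector field $\widetilde\xi=\rho u^h$, and then compare the result with the defining identity \eqref{kappamu}. The central object is $\widetilde R(Z,W)\widetilde\xi$, so first I would compute the three relevant curvature expressions $\widetilde R(X^h,Y^h)u^h$, $\widetilde R(X^h,Y^{t_G})u^h$ and $\widetilde R(X^{t_G},Y^{t_G})u^h$ by substituting $Z=u$ (up to the factor $\rho$) into the formulas of Proposition~\ref{riem-curv-T1M}. Because $Z=u$, many terms collapse: $R(\cdot,\cdot)u$ and $R_u u=0$ simplify the expressions considerably, and the covariant-derivative terms $(\nabla_u R)$ and $(\nabla_Z R)$ appear. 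This is where the hypothesis $a\neq 0$ and the reduction (via Remark~\ref{phiprime}, using $\mathcal D$-homothetic invariance of the $(\kappa,\mu)$-condition proved in \cite{CKM}) to the case $\phi>0$ are used, so that Proposition~\ref{riem-curv-T1M} applies verbatim.

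For part~(a), I would impose \eqref{kappamu} with the explicit $\widetilde h$ from \eqref{h-explicit}, and match coefficients of the independent tensorial terms in the computed $\widetilde R(Z,W)\widetilde\xi$. The terms in \eqref{kappamu} are algebraic in $\{Z,W,hZ,hW\}$ and involve only the values of $R_u$ (through $\widetilde h$), whereas the curvature formula also produces terms containing $(\nabla_u R)$, $(\nabla_Z R)$ and products like $R(R(\cdot,u)\cdot,u)$ and $R(\cdot,u)R(\cdot,u)$. The key point is that for \eqref{kappamu} to hold with constant $\kappa,\mu$, all the terms involving covariant derivatives of $R$ and all the non-linear curvature terms that are \emph{not} expressible via $R_u$ alone must vanish identically; this forces strong pointwise conditions on $R$. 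Since $\kappa\neq-1$ corresponds (via \eqref{h-kappamu}) to $\widetilde h^2\neq 0$, Theorem~\ref{h2=0} already restricts the base to be flat, of constant curvature, or a non-compact rank-one symmetric space of non-constant curvature; the surviving constraints from matching \eqref{kappamu} then eliminate the non-constant-curvature rank-one spaces (where the residual $(\nabla R)$ and higher curvature terms cannot cancel), leaving exactly constant sectional curvature $\bar c$.

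For part~(b), I would assume $(M,\langle,\rangle)$ has constant sectional curvature $\bar c$, so that $R(X,Y)Z=\bar c(\langle Y,Z\rangle X-\langle X,Z\rangle Y)$, $R_uX=\bar c(X-\langle X,u\rangle u)$, and $\nabla R=0$. Substituting this into the simplified curvature expressions kills every covariant-derivative term and reduces each Jacobi-operator term to a scalar multiple of the identity on $\{u\}^\perp$; a direct computation then shows $\widetilde R(Z,W)\widetilde\xi$ takes exactly the form \eqref{kappamu}, and reading off the coefficients of $\widetilde\eta(W)Z-\widetilde\eta(Z)W$ and $\widetilde\eta(W)\widetilde hZ-\widetilde\eta(Z)\widetilde hW$ yields the explicit $\kappa$ and $\mu$ in \eqref{values}. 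Finally, using $\widetilde h^2=(1+\kappa)\widetilde\varphi^2$ from \eqref{h-kappamu} together with the eigenvalue computation already carried out in the proof of Theorem~\ref{h2=0}, I would verify $\kappa\neq-1$ whenever the structure is not paraSasakian. The main obstacle throughout is purely computational: organising the large curvature formulas of Proposition~\ref{riem-curv-T1M} at $Z=u$ and isolating, among many similar-looking tensorial terms, precisely which combinations reproduce \eqref{kappamu}; constant curvature is the hypothesis that makes this bookkeeping tractable, while in part~(a) the difficulty is the reverse—showing no other curvature type survives the matching.
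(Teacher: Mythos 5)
Your part~(b) follows the paper's route essentially verbatim: reduce to $\phi>0$ by $\mathcal D$-homothetic invariance, substitute $R(X,Y)Z=\bar c(\langle Y,Z\rangle X-\langle X,Z\rangle Y)$ into Proposition~\ref{riem-curv-T1M}, observe that $\widetilde R(Z,W)\widetilde\xi=0$ when $Z,W$ are both orthogonal to $\widetilde\xi$, match the remaining components $\widetilde R(X^h,\widetilde\xi)\widetilde\xi$ and $\widetilde R(Y^{t_G},\widetilde\xi)\widetilde\xi$ against \eqref{kappamu} to read off $\kappa$ and $\mu$, and rule out $\kappa=-1$ via \eqref{h-kappamu} and Theorem~\ref{h2=0}. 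That part is sound.

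Part~(a), however, takes a genuinely different route from the paper, and as you have sketched it there is a gap. First, a small repairable point: Theorem~\ref{h2=0} classifies the case $\widetilde h^2=0\neq\widetilde h$ and gives you nothing when $\kappa\neq-1$; what you actually need is that $\widetilde h^2=(1+\kappa)\widetilde\varphi^2$ together with the explicit formula for $\widetilde h^2$ forces $R_u$ to satisfy a quadratic polynomial with constant coefficients, hence $M$ is Osserman and (away from dimension issues) two-point homogeneous. The serious problem is the next step: you propose to eliminate the non-constant-curvature rank-one symmetric spaces on the grounds that ``the residual $(\nabla R)$ and higher curvature terms cannot cancel.'' But rank-one symmetric spaces are locally symmetric, so every $(\nabla R)$ term in Proposition~\ref{riem-curv-T1M} vanishes identically on them; the entire burden falls on the quadratic curvature terms such as $R(R(Y,u)u,u)X$ and $R(X,u)R(Y,u)Z$, and you give no argument that these fail to reassemble into the form \eqref{kappamu} for, say, complex hyperbolic space. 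This would require a nontrivial computation with the two-eigenvalue Jacobi operator that you have not carried out. The paper sidesteps all of this: since $\kappa\neq-1$, the identity \eqref{nabla-kappamu} for $(\nabla_Z\varphi)W$ holds, and evaluating it on two tangential lifts $Z=Y_1^{t_G}$, $W=Y_2^{t_G}$ using only the connection formulas of Proposition~\ref{lcc-T1M} yields directly $a^3\{\langle R(Y_1,u)Y_2,u\rangle u-R(Y_1,u)Y_2\}^h-a^2b\{\cdots\}^{t_G}=0$, whence (as $a\neq0$) $R(Y_1,u)Y_2=\langle R(Y_1,u)Y_2,u\rangle u$ for all $Y_1,Y_2\perp u$, which is exactly the pointwise characterisation of constant sectional curvature. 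You should either adopt that argument or supply the missing computation excluding the non-constant-curvature two-point homogeneous spaces.
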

\begin{proof}
As conditions $\kappa \neq -1$ and $a \neq 0$ are invariant under $\mathcal D$-homothetic deformations, without loss of generality we can assume $\phi >0$ (Remark~\ref{phiprime}). Let us suppose that $(T_1M, \widetilde\varphi,\widetilde\xi,\widetilde\eta,\widetilde{G})$ is a paracontact $(\kappa,\mu)$-space with $\kappa\neq -1$.  Then equation \eqref{nabla-kappamu} holds for all vector fields $Z,W$ on $T_1M$.  Taking $Z=Y_1^{t_G}$ and $W=Y_2^{t_G}$, with $Y_1, Y_2$ orthogonal to $u$, the formulas from
Theorem~\ref{th-paracontact} and the formulas for $\w{h}$ in \eqref{h-explicit} give us that
\begin{equation*}
\begin{aligned}
&-\w{G}(Y_1^{t_G}-\w{h}Y_1^{t_G},Y_2^{t_G})\w{\xi} +\w{\eta}(Y_2^{t_G})({Y_1^{t_G}}-\w{h}{Y_1^{t_G}})=-\w{G}({Y_1^{t_G}}-\w{h}{Y_1^{t_G}},Y_2^{t_G})\w{\xi}\\
&=-\w{G}({Y_1^{t_G}},Y_2^{t_G})\w{\xi}+\frac{1}{4\alpha}\w{G}(-2bY_1^h+(a+c)Y_1^{t_G}-a (R_u Y_1)^{t_G},Y_2)\w{\xi}.
\end{aligned}
\end{equation*}
Using \eqref{g-tilde2}  and the definition of $\alpha$, we then obtain
\begin{equation}\label{eq-first}
\begin{array}{l}
-\w{G}(Y_1^{t_G}-\w{h}Y_1^{t_G},Y_2^{t_G})\w{\xi}+\w{\eta}(Y_2^{t_G})({Y_1^{t_G}}-\w{h}{Y_1^{t_G}})
=\left(  \left(-a+\frac{\alpha-b^2}{4\alpha} \right) \langle Y_1,Y_2 \rangle-\frac{a^2}{4\alpha} \langle R_u Y_1,Y_2 \rangle \right) \rho u^h.
\end{array}
\end{equation}
On the other hand, it follows from Proposition~\ref{lcc-T1M} that
\begin{equation}\label{eq-second}
\begin{aligned}
(\w{\nabla}_{Y_1^{t_G}} \w{\varphi}){Y_2^{t_G}}&=\w{\nabla}_{Y_1^{t_G}} (\w{\varphi}{Y_2^{t_G}})-\w{\varphi}(\w{\nabla}_{Y_1^{t_G}} {Y_2^{t_G}})=\w{\nabla}_{Y_1^{t_G}} (\w{\varphi}{Y_2^{t_G}})\\
&=\frac{1}{2\rho\alpha} \left(-a\w{\nabla}_{Y_1^{t_G}} Y_2^h +\frac{\phi}{a+c+d} \w{\nabla}_{Y_1^{t_G}} (\langle Y_2,u \rangle) u^h \right)\\
&=\frac{a^3}{4\rho\alpha^2} (R(Y_1,u)Y_2)^h +\left(  \frac{2\alpha+ad}{4\alpha} \langle Y_1,Y_2 \rangle-\frac{a^2(ad+b^2)}{4\alpha^2} \langle R(Y_1,u)Y_2,u \rangle \right) \rho u^h\\
&-\frac{a^2b}{4\rho\alpha^2} (R(Y_1,u)Y_2- \langle R(Y_1,u) Y_2,u \rangle u)^{t_G},
\end{aligned}
\end{equation}
where we have used the fact that
\begin{align*}
\w{\nabla}_{Y_1^{t_G}} (\langle Y_2,u \rangle) &=\frac{1}{\rho(a+c+d)} \w{\nabla}_{Y_1^{t_G}} ( \w{G} (Y_2^h,\w{\xi} ))=\langle Y_1,Y_2 \rangle.
\end{align*}

Comparing equations \eqref{eq-first} and \eqref{eq-second}, we get
\[
a^3 \{\langle R(Y_1,u)Y_2,u \rangle u-R(Y_1,u)Y_2 \}^h-a^2b \{ \langle R(Y_1,u) Y_2,u \rangle u-R(Y_1,u)Y_2 \}^{t_G}=0.
\]
Since $a\neq 0$, the above equation yields $R(X,u)Y=\langle R(X,u)Y,u \rangle u$, for all vector fields $X,Y$ on $M$ orthogonal to $u$. Therefore, $(M,\langle,\rangle)$ is of constant sectional curvature \cite{blair-book}.

Conversely, we  now suppose that $(M,\langle,\rangle)$ is of constant sectional curvature $\bar{c}$ and we want to check that formula \eqref{kappamu} is satisfied for some values of $\kappa$ and $\mu$. Indeed, we have from Proposition~\ref{riem-curv-T1M} that
\[
\w{R}(X_1^h,X_2^h)\w{\xi}=\w{R}(X_1^h,Y_1^{t_G})\w{\xi}=\w{R}(Y_1^{t_G},Y_2^{t_G})\w{\xi}=0,
\]
for all $X_i,Y_i$ orthogonal to $u$. 
By the symmetries of the curvature tensor $\w{R}$, it is then enough to check that \eqref{kappamu} holds for $Z=X^h$ (or $Y^{t_G}$) and $W=\w{\xi}$, with $X,Y$ orthogonal to $u$. Applying Proposition~\ref{riem-curv-T1M}, we have that
\begin{align}
\w{R}(X^h,\w{\xi})\w{\xi} &=\frac{\rho^2}{4\alpha} (-3a^2 \bar{c}^2+(4\alpha+2ad)\bar{c}+d^2 )X^h
+\frac{\rho^2}{\alpha}X^{t_G}, \label{r-x-xi}\\
\w{R}(Y^{t_G},\w{\xi})\w{\xi} &=\frac{\rho^2}{\alpha}(ab\bar{c}-bd)Y^h
+\frac{\rho^2}{4\alpha}(a^2\bar{c}^2+2(ad+2b^2)\bar{c}+d(4(a+c)+d))Y^{t_G}. \label{r-y-xi}
\end{align}
On other hand, we have from \eqref{kappamu} that
\begin{align}
\kappa (\w{\eta}(\w{\xi})X^h-\w{\eta}(X^h)\w{\xi}) &+\mu(\w{\eta}(\w{\xi})\w{h}X^h-\w{\eta}(X^h)\w{h}\w{\xi}) \label{r-x-xi2}\\
&=\left( \kappa+\frac{a\bar{c}-(a+c)}{4\alpha} \right)X^h-\frac{b \mu \bar{c}}{2\alpha} X^{t_G} \nonumber\\
\kappa (\w{\eta}(\w{\xi})Y^{t_G}-\w{\eta}(Y^{t_G})\w{\xi}) &+\mu(\w{\eta}(\w{\xi})\w{h}Y^{t_G}-\w{\eta}(Y^{t_G})\w{h}\w{\xi}) \label{r-y-xi2}\\
&=-\frac{b \mu}{2\alpha} Y^h+\left( \kappa-\frac{a\bar{c}-(a+c)}{4\alpha} \mu \right)Y^{t_G} \nonumber
\end{align}

Therefore, formula \eqref{kappamu} is satisfied if and only if \eqref{r-x-xi} and \eqref{r-x-xi2} coincide, as well as \eqref{r-y-xi} and \eqref{r-y-xi2}. Since the horizontal and tangential parts are uniquely determined, this is equivalent to the following system of equations:
\begin{equation}\label{system-kappamu}
\left\{
\begin{aligned}
&3a^2\bar{c}^2-(4\alpha+2ad)\bar{c}-d^2=16\alpha^2 \kappa+4\alpha(a\bar{c}-(a+c))\mu,\\
&ab\bar{c}^2-bd\bar{c}=2\alpha b\mu \bar{c},\\
&ab\bar{c}-bd=2\alpha b \mu,\\
&-a^2\bar{c}^2+2(ad+2b^2)\bar{c}-d(4(a+c)+d)=16\alpha^2\kappa-4\alpha(a\bar{c}-(a+c))\mu,
\end{aligned}
\right.
\end{equation}
where we took into account \eqref{para-cond}. Summing the first and last equations of \eqref{system-kappamu}, we obtain
\[
a^2 \bar{c}^2 -2(\alpha-b^2)\bar{c}-d^2-2(a+c)d=16\alpha^2 \kappa,
\]
which determines $\kappa$ as in \eqref{values}.

The second equation  of \eqref{system-kappamu} follows form the third one and we have that either $b=0$ or $\mu=\frac{1}{2\alpha}(a\bar{c}-d)$, as in \eqref{values}. Thus, we are left with the case $b=0$.

If $b=0$ and $\bar{c} = \frac{a+c}{a}$, then $T_1 M$ is paraSasakian (see Theorem~\ref{th-parasasakian}) and in particular a $(\kappa,\mu)$-space. If $b=0$ and $\bar{c} \neq \frac{a+c}{a}$ (that is, $T_1M$ is not paraSasakian) then substituting the value obtained for $\kappa$ in the first equation of \eqref{system-kappamu}, we find
\[
a^2\bar{c}^2-a(a+c+d)\bar{c}+(a+c)d=2\alpha(a\bar{c}-(a+c))\mu.
\]
Therefore,
\[
\mu=\frac{1}{2\alpha} \frac{a^2\bar{c}^2-a(a+c+d)\bar{c}+(a+c)d}{a\bar{c}-a(a+c)}=\frac{1}{2\alpha} \frac{(a\bar{c}-d)(a\bar{c}-(a+c))}{a\bar{c}-a(a+c)}=\frac{1}{2\alpha} (a\bar{c}-d),
\]
obtaining again \eqref{values}.

We will now show that $\kappa\neq -1$ when the $g$-natural paracontact metric structure is not paraSasakian. We will prove this by contradiction. Let us suppose that $\kappa=-1$ but the $g$-natural paracontact metric structure is not paraSasakian. Then, \eqref{values} implies that
\[
1+\kappa=\frac{1}{16\alpha^2} (a^2\bar{c}^2 -2(\alpha-b^2)\bar{c}+(a+c)^2),
\]
where we have used the definition of $\alpha$, \eqref{para-cond} and the fact that $d=-(a+c)(4a+1)+4b^2$. Hence, $\kappa=-1$ if and only if $a^2\bar{c}^2 -2(\alpha-b^2)\bar{c}+(a+c)^2=0$, whose solutions are  $\bar{c}=-\left( \frac{\sqrt{-\alpha} \pm |b|}{a}\right)^2$.

If $b=0$, then $\bar{c}=-\left( \frac{\sqrt{-\alpha}}{a}\right)^2=\frac{\alpha}{a^2}=\frac{a+c}{a}$. So, the $g$-natural paracontact metric structure  is paraSasakian (Theorem~\ref{th-parasasakian}), which  contradicts our assumption.

If $b\neq0$, then $\w{h}\neq0$ (Theorem~\ref{th-parasasakian}) and $\w{h}^2=0$ (formula  \eqref{h-kappamu}). But then, as proved in Theorem~\ref{h2=0}, $M$ cannot be of constant sectional curvature. So, this case cannot occur, either.
\end{proof}

\begin{Rk}
We showed in the proof of the above Theorem that if the base manifold $(M,\langle,\rangle)$ is of constant sectional curvature $\bar{c}$ and $T_1M$  is a non-paraSasakian $(\kappa,\mu)$-space, then $\kappa\neq-1$. However, this does not exclude the existence of non-paraSasakian $g$-natural paracontact metric manifolds $(T_1M, \widetilde\varphi,\widetilde\xi,\widetilde\eta,\widetilde{G})$ that are $(\kappa,\mu)$-spaces with $\kappa= -1$, it only ensures that their base manifold cannot have constant sectional curvature.
\end{Rk}

\medskip
%
%
We now characterise  paracontact $(\kappa,\mu)$-spaces  of constant $\varphi$-sectional curvature.
Koufogiorgos \cite{kouf97} proved that a $(2n+1)$-dimensional $(n>1)$, non-Sasakian $(\kappa, \mu)$-contact Riemannian manifold $(M, \varphi, \xi, \eta, g)$ has constant $\varphi$-sectional curvature if and
only if $\mu=1+\kappa$.

As a consequence (see again \cite{kouf97}), if $M$ is an $n$-dimensional Riemannian manifold, $n>2$, of constant
sectional curvature $c$, then the tangent sphere bundle $T_{1}M$ has constant $\varphi$-sectional curvature
$(c^{2})$ if and only if $c=2\pm\sqrt{5}$. On the other hand, if $n=2$ then the tangent sphere bundle $T_{1}M$ always has constant $\varphi$-sectional curvature $c^{2}$ for any $c\neq 1$.

A three-dimensional paracontact metric $(\kappa,\mu)$-space has always constant $\varphi$-sectional curvature.
The paracontact analogue of Koufogiorgos' result is given by the following.

\begin{Th}\label{space-forms}
Let $(M, \varphi,\xi,\eta,g)$ be a paracontact metric $(\kappa,\mu)$-space of dimension $2n+1\geq5$ with $\kappa\neq-1$. Then it has constant $\varphi$-sectional curvature if and only if $\mu=1-\kappa$.
\end{Th}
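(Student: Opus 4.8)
The plan is to compute the curvature of the $(\kappa,\mu)$-space explicitly in terms of $\kappa$, $\mu$, and the $h$-eigenvalue structure, and then to evaluate $\varphi$-sectional curvatures on a suitable basis and demand that they coincide. Since $\kappa\neq-1$, we have by \eqref{h-kappamu} that $h^2=(1+\kappa)\varphi^2$ with $1+\kappa\neq0$, so $h$ is diagonalisable and the distribution $\ker\widetilde\eta$ splits into eigenbundles. The key starting point is the full curvature formula for a (non-paraSasakian) paracontact $(\kappa,\mu)$-space, which is known from \cite{CKM} and is the paracontact analogue of the Boeckx/Blair--Koufogiorgos--Papantoniou formula: $R(Z,W)$ acting on arbitrary vectors can be written as an explicit polynomial in $\kappa$, $\mu$, $\eta$, $\varphi$, $h$ and $g$. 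I would first record this formula, or reproduce it by inserting \eqref{nabla-kappamu} and the covariant derivative of $h$ into the definition $R=[\nabla,\nabla]-\nabla_{[\cdot,\cdot]}$.

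First I would set $\lambda=\sqrt{1+\kappa}$ (real or imaginary according to the sign of $1+\kappa$, but nonzero) and decompose $\ker\widetilde\eta={\mathcal D}(\lambda)\oplus{\mathcal D}(-\lambda)$, the $h$-eigenspaces. I would then pick orthonormal-type vectors $X\in{\mathcal D}(\lambda)$ and evaluate the sectional curvature $g(R(X,\varphi X)\varphi X,X)$, which is the $\varphi$-sectional curvature of the plane spanned by $X$ and $\varphi X$; note $hX=\lambda X$ forces $h\varphi X=-\lambda\varphi X$ by the second relation in \eqref{h-properties}, so $\varphi X\in{\mathcal D}(-\lambda)$. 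Plugging these into the curvature formula collapses many terms, since $\eta(X)=\eta(\varphi X)=0$ and $hX,h\varphi X$ are scalar multiples of $X,\varphi X$. The upshot is an explicit expression of the form $K_\varphi(X)=A(\kappa,\mu)+B(\kappa,\mu)\,\lambda+\ldots$, where the coefficients depend on which eigenspace $X$ lies in.

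The decisive step is to compare $\varphi$-sectional curvatures across the two eigenspaces and across mixed planes. Because $\dim\geq5$, there exist at least two independent directions within a single eigenspace, and there are genuinely mixed $\varphi$-invariant planes; constancy of $K_\varphi$ forces the eigenspace-dependent part of the above expression to vanish. I expect the main obstacle to be precisely this bookkeeping: correctly isolating, from the curvature formula, the part that is odd in $\lambda$ (hence changes sign between ${\mathcal D}(\lambda)$ and ${\mathcal D}(-\lambda)$) versus the part that is even, and showing that the odd part is governed exactly by the combination $\mu-(1-\kappa)$. The cleanest route is to compute $K_\varphi$ for a unit $X$ with $hX=\lambda X$ and separately for a unit $X'$ with $hX'=-\lambda X'$, subtract, and read off that the difference is proportional to $\lambda\bigl(\mu-1+\kappa\bigr)$ up to a nonzero factor. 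Since $\kappa\neq-1$ guarantees $\lambda\neq0$, vanishing of this difference is equivalent to $\mu=1-\kappa$.

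For the converse, I would substitute $\mu=1-\kappa$ back into the curvature formula and verify directly that $K_\varphi$ becomes independent of the chosen $\varphi$-invariant plane, yielding a single constant value (which, as in Koufogiorgos' Riemannian statement, one can also exhibit explicitly). This direction is a routine but careful simplification: once $\mu=1-\kappa$ kills the $h$-dependent (odd-in-$\lambda$) contributions, the remaining terms depend only on $\kappa$ and the compatible metric, so every $\varphi$-plane sees the same sectional curvature. The genuinely delicate point throughout is keeping track of signs coming from the pseudo-Riemannian compatibility \eqref{compatibile}, since $g(\varphi X,\varphi X)=-g(X,X)+\eta(X)^2$ introduces minus signs that are absent in the contact Riemannian case of \cite{kouf97}, and these must be handled consistently to arrive at $\mu=1-\kappa$ rather than the Riemannian $\mu=1+\kappa$.
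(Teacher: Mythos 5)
Your overall framework (use the explicit curvature formula from \cite{CKM}, evaluate $\varphi$-sectional curvatures on well-chosen planes, and match the converse by direct substitution) is the same as the paper's, and your converse direction is fine. But your decisive step for the forward direction fails. The paper's computation shows that for a non-lightlike $X\perp\xi$,
\[
K(X,\varphi X)=2\mu-1-\frac{\kappa-1+\mu}{\kappa+1}\cdot\frac{g(hX,X)^2-g(\varphi hX,X)^2}{g(X,X)^2},
\]
so $h$ enters only through the quantity $g(hX,X)^2-g(\varphi hX,X)^2$, which is \emph{even} in the eigenvalue. If $X$ is an $h$-eigenvector with eigenvalue $\pm\lambda$, then $g(hX,X)^2=\lambda^2 g(X,X)^2$ and $g(\varphi hX,X)=\pm\lambda\, g(\varphi X,X)=0$ (since $g(\varphi\cdot,\cdot)$ is skew), so the ratio equals $1+\kappa$ for \emph{every} eigenvector, in either eigendistribution; one gets $K(X,\varphi X)=\mu-\kappa$ identically. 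The difference you propose to compute between $X\in\mathcal D(\lambda)$ and $X'\in\mathcal D(-\lambda)$ is therefore zero for all $\kappa,\mu$, and there is no ``odd in $\lambda$'' part to isolate: this comparison can never detect the condition $\mu=1-\kappa$. The information has to come from a vector that is \emph{not} an $h$-eigenvector; the paper takes $X=e_1+2\varphi e_2$ (which is where the hypothesis $2n+1\geq5$ is actually used) and checks that the ratio then takes a different value, forcing the coefficient $\kappa-1+\mu$ to vanish. Your passing mention of ``mixed planes'' points in the right direction, but the argument you actually outline does not use them.

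A secondary gap: when $\kappa<-1$ the operator $h$ satisfies $h^2=(1+\kappa)I<0$ on $\ker\eta$ and is \emph{not} diagonalisable over $\mathbb{R}$, so the eigenspace decomposition $\ker\eta=\mathcal D(\lambda)\oplus\mathcal D(-\lambda)$ you start from does not exist in that case. The paper circumvents this by working with a $\varphi$-basis of eigenvectors of $\varphi h$ (eigenvalues $\pm\sqrt{-(1+\kappa)}$) when $\kappa<-1$; any corrected version of your argument needs the analogous case split.
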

\begin{proof}
The curvature tensor of a paracontact $(\kappa,\mu)$-space with $\kappa \neq -1$ was completely described in \cite{CKM}. In particular, for any non-lightlike vector field $X$ orthogonal to $\xi$, we can compute the corresponding $\varphi$-sectional curvature as
\begin{equation}\label{phi-sectional}
K(X,\varphi X)=2\mu-1-\frac{\kappa-1+\mu}{\kappa+1} \cdot \frac{g(hX,X)^2-g(\varphi h X,X)^2}{g(X,X)^2}.
\end{equation}

If $\mu=1-\kappa$, this means that $K(X,\varphi X)=2\mu-1$, which is constant.

Conversely, if $K(X,\varphi X)$ is constant, then either $\kappa-1+\mu=0$ (and so, $\mu=1-\kappa$), or $\frac{g(hX,X)^2-g(\varphi h X,X)^2}{g(X,X)^2}$ does not depend on the vector field $X$. We will see that the latter case is not possible.

Indeed, we know from \cite{CKM} that if $\kappa>-1$ there exists a $\varphi$-basis $\{ \xi, e_1, \ldots, e_n, \varphi e_1,\ldots,\varphi e_n\}$ such that $h e_i=\lambda e_i$ and $h \varphi e_i=-\lambda \varphi e_i$, where $\lambda=\sqrt{1+\kappa}$ and $g(e_i,e_i)=-g(\varphi e_i, \varphi e_i)=\pm 1$ (how many of each sign will depend on the index of the eigendistributions of $h$). If $\kappa<-1$, we can take a $\varphi$-basis of eigenvectors of $\varphi h$ with eigenvalues $\pm \lambda=\pm \sqrt{-(1+\kappa)}$.

If we take $X=e_1$, then $X$ is orthogonal to $\xi$ and $g(X,X)=g(e_1,e_1)=\pm 1\neq0$, so
\begin{equation*}
\frac{g(hX,X)^2-g(\varphi h X,X)^2}{g(X,X)^2}=
\left\{
\begin{aligned}
\lambda^2&=1+\kappa, \text{ if } \kappa>-1,\\
-\lambda^2&=1+\kappa, \text{ if } \kappa<-1.
\end{aligned}
\right.
\end{equation*}

On the other hand, if we take $X=e_1+2\varphi e_2$, then it is also orthogonal to $\xi$ and
\[
g(X,X)=g(e_1,e_1)+4g(\varphi e_2,\varphi e_2)=g(e_1,e_1)-4g(e_2, e_2)\neq0,
\]
so we can compute again:
\begin{equation*}
\frac{g(hX,X)^2-g(\varphi h X,X)^2}{g(X,X)^2}=
\left\{
\begin{aligned}
&\lambda^2 \left( \frac{g(e_1,e_1)+4g(e_2, e_2)}{g(e_1,e_1)-4g(e_2, e_2)} \right)^2 \neq\lambda^2=1+\kappa, \text{ if } 1+\kappa>-1,\\
&-\lambda^2 \left( \frac{g(e_1,e_1)+4g(e_2, e_2)}{g(e_1,e_1)-4g(e_2, e_2)} \right)^2 \neq -\lambda^2=1+\kappa, \text{ if } 1+\kappa<-1.
\end{aligned}
\right.
\end{equation*}
Since the above values do not coincide, the $\varphi$-sectional curvature cannot be constant. So, this case cannot occur.
\end{proof}

\noindent
As a consequence of Theorems~\ref{th-kappamu} and \ref{space-forms}, we have the following result.

\begin{Cor}\label{kmuphi}
If $(M, \langle,\rangle)$ is a $n$-dimensional ($n\geq 2$) Riemannian manifold with constant sectional curvature $\bar{c}$, then the $g$-natural paracontact metric manifold $(T_1 M,\w\phi,\w\xi,\w\eta,\w{G})$ defined as in Theorem~\emph{\ref{th-paracontact}} is a $(\kappa,\mu)$-space of constant $\w{\varphi}$-sectional curvature if and only if $\bar c$ and the parameters $a \neq 0,b,c,d$ determining $\widetilde G$ satisfy
\begin{equation}\label{costphi}
a^2 \bar{c}^2+2((4a-1)\alpha+b^2)\bar{c}-(a+c)(a+c+2d)=0.
\end{equation}
In particular, if $(M,\langle,\rangle)$ is flat, then
\begin{itemize}
\item either $b=\pm \sqrt{(a+c)(a+\frac18)}$ and $d=-\frac{a+c}2<0$,
\item  or $a+c=0$, $b\neq0$ and $d=4b^2>0$.
\end{itemize}
\end{Cor}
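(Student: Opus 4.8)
The plan is to combine the two results already available: Theorem~\ref{th-kappamu}, which guarantees that $T_1M$ is \emph{automatically} a paracontact $(\kappa,\mu)$-space whenever $(M,\langle,\rangle)$ has constant sectional curvature $\bar c$ and supplies the explicit values \eqref{values} of $\kappa,\mu$, and Theorem~\ref{space-forms}, which (in dimension $\geq 5$ and for $\kappa\neq-1$) reduces the constancy of the $\widetilde\varphi$-sectional curvature to the single scalar identity $\mu=1-\kappa$. Since the $(\kappa,\mu)$ property is free, the whole statement amounts to translating $\mu=1-\kappa$ into a relation among $\bar c,a,b,c,d$. I would first treat the generic case and then dispose of the paraSasakian one.

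In the non-paraSasakian case Theorem~\ref{th-kappamu}(b) yields $\kappa\neq-1$, so Theorem~\ref{space-forms} applies and constancy of the $\widetilde\varphi$-sectional curvature is equivalent to $\kappa+\mu=1$. Substituting \eqref{values} and clearing the common denominator $16\alpha^2$, this becomes
\[
a^2\bar c^2-2(\alpha-b^2)\bar c-d(2(a+c)+d)+8\alpha(a\bar c-d)-16\alpha^2=0 .
\]
Collecting the coefficients of $\bar c^2$ and $\bar c$ reproduces exactly the first two terms of \eqref{costphi}, since $-2(\alpha-b^2)+8a\alpha=2((4a-1)\alpha+b^2)$. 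The constant term is $-2(a+c)d-(d^2+8\alpha d+16\alpha^2)=-2(a+c)d-(d+4\alpha)^2$, and the defining relation \eqref{para-cond} gives $d+4\alpha=-(a+c)$, so $(d+4\alpha)^2=(a+c)^2$ and the constant term equals $-(a+c)(a+c+2d)$. Thus the condition is precisely \eqref{costphi}.

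The main obstacle is the paraSasakian case, where $\kappa=-1$ and Theorem~\ref{space-forms} is unavailable (formula \eqref{phi-sectional} has $\kappa+1$ in a denominator). By Theorem~\ref{th-parasasakian} this forces $b=0$ and $\bar c=(a+c)/a$, and a direct substitution into \eqref{costphi} gives the value $16a(a+c)^2\neq0$ (as $a\neq0$ and $\alpha<0$ preclude $a+c=0$), so \eqref{costphi} \emph{fails}. To close the equivalence I must therefore show that such a structure is never of constant $\widetilde\varphi$-sectional curvature; lacking \eqref{phi-sectional}, I would evaluate the $\widetilde\varphi$-sectional curvature directly from Proposition~\ref{riem-curv-T1M} with $b=0$ and $R_uX=\bar cX$, exhibiting two $\widetilde\varphi$-planes with different sectional curvatures, which is possible precisely when $\dim T_1M\geq5$. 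This is where the dimensional hypothesis is genuinely used and is the step demanding the most care.

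Finally, for the flat specialisation I set $\bar c=0$ in \eqref{costphi}, which collapses to $(a+c)(a+c+2d)=0$. The factor $a+c+2d=0$ together with \eqref{para-cond} (forcing $\alpha=-(a+c)/8$, hence $a+c>0$) and the definition $\alpha=a(a+c)-b^2$ yields $b^2=(a+c)(a+\tfrac18)$ and $d=-(a+c)/2<0$, the first listed possibility; the factor $a+c=0$ gives $\alpha=-b^2$, so $b\neq0$ (since $\alpha<0$) and, again by \eqref{para-cond}, $d=4b^2>0$, the second possibility. In both cases a flat base has $\bar c=0\neq(a+c)/a$, so the structure is not paraSasakian and the generic analysis of the second paragraph indeed governs it.
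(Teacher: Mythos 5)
Your proposal is correct and follows essentially the same route the paper intends: the corollary is stated as an immediate consequence of Theorem~\ref{th-kappamu} and Theorem~\ref{space-forms}, i.e.\ one substitutes the values \eqref{values} into $\mu=1-\kappa$, uses $d+4\alpha=-(a+c)$ from \eqref{para-cond} to simplify the constant term, and then specialises to $\bar c=0$, exactly as you do. Your extra attention to the paraSasakian case (where $\kappa=-1$ and Theorem~\ref{space-forms} does not apply) is a legitimate refinement that the paper only addresses obliquely in the Remark following the corollary, which notes the incompatibility of \eqref{costphi} with $b=0$, $\bar c=(a+c)/a$ but does not verify that a $g$-natural paraSasakian structure cannot itself have constant $\widetilde\varphi$-sectional curvature.
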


\begin{Rk}
It is easily seen that Equation~\eqref{costphi} is incompatible with conditions $b=0, \bar c=\frac{a+c}{a}$ characterizing $g$-natural structures (Theorem~\ref{th-parasasakian}). Hence, the examples described in Corollary~\ref{kmuphi} are $(\kappa,\mu)$-spaces, not paraSasakian, of constant $\varphi$-sectional curvature. ParaSasakian space forms (that is, paraSasakian manifolds with constant $\varphi$-sectional curvature) were classified in \cite{zamkovoy-arxiv}.
\end{Rk}

\section{Paracontact $g$-natural $(\kappa,\mu)$-spaces from the contact ones}\label{sec-deform}

We will now investigate the relationship between $g$-natural contact metrics on $T_1M$, as introduced and first studied in \cite{AC1}, and $g$-natural paracontact metric structures, considering the deformations of a contact $(\kappa,\mu)$-space into a paracontact one introduced in \cite{C}.

We recall that a $g$-natural contact metric structure on $T_1M$, which we will denote here by $(\varphi ',{\xi}',{\eta}',{G}')$, is a contact metric structure with ${G}'$ a \emph{Riemannian} metric induced by a $g$-natural metric $G$ on $TM$ (see \cite{AC1}). The Riemannian metric $G'$ is explicitly described by \eqref{g-tilde2}, for some real parameters $a' ,b',c',d'$, satisfying conditions \eqref{cond-Riem-T1M}, that is $a'>0,  a'+c'+d'>0$ and $\alpha '=a '(a'+c' )-(b') ^2>0$. Notice that $\alpha'$ has the opposite sign with respect to its analogue for $g$-natural paracontact metric structures. The Reeb vector field is given by $\xi '=\rho' u^h$, and the tensor $\varphi '$ and the $1$-form  $\eta '$, defined as in the formulas (3.6)-(3.8) of \cite{AC1}, formally coincide with \eqref{xi}-\eqref{phi} of the $g$-natural paracontact metric structures. The compatibility condition corresponding to \eqref{para-cond} is given by
\[
a'+c'+d'=\frac{1}{(\rho') ^2}=4\alpha'.
\]

Following \cite{C}, for any contact metric $(\kappa,\mu)$-space $(\varphi ', \xi ',\eta ', g')$ which is not Sasakian (that is, satisfies $\kappa<1$), we can define two canonical paracontact metric structures, by taking
\begin{align}
{\varphi}_1 =& \frac{1}{\sqrt{1-\kappa}}\varphi ' h', &\quad  &{g}_1=\frac{1}{\sqrt{1-\kappa}} d \eta '(\cdot, \varphi 'h' \cdot)+ \eta '\otimes \eta '
=-\frac{1}{\sqrt{1-\kappa}} g' (\cdot, h' \cdot)+\eta '\otimes \eta ',  \label{deform-1}\\
{\varphi}_2 =& \frac{1}{\sqrt{1-\kappa}}h', &\quad &{g}_2=\frac{1}{\sqrt{1-\kappa}} d\eta'(\cdot, h' \cdot)+\eta '\otimes \eta '
=\frac{1}{\sqrt{1-\kappa}} g (\cdot,\varphi ' h' \cdot)+\eta '\otimes \eta '.  \label{deform-2}
\end{align}
Moreover, the deformed structures $(\varphi_1,\xi', \eta',g_1)$  and $(\varphi_2,\xi', \eta',g_2)$ are paracontact metric $(\kappa_i,\mu_i)$-spaces, with
\begin{align*}
\kappa_1&=\left(1-\frac{\mu}2 \right)^2-1, & \mu_1&=2(1-\sqrt{1-\kappa}),\\
\kappa_2&=\kappa-2 +\left(1-\frac{\mu}2 \right)^2, & \mu_2&=2.
\end{align*}

As proved in \cite{AC2}, if  $(T_1M,{\varphi}', {\xi}',{\eta}',{G}')$ is a $g$-natural non-Sasakian $(\kappa,\mu)$-space, then $(M,\langle,\rangle)$ is of constant sectional curvature $\bar{c}$,  and
\[
\kappa=\frac{1}{16 (\alpha')^2} \left( -(a')^2\bar{c}^2 +2( \alpha '-(b')^2)\bar{c}+d'(2(a'+c')+d')\right), \quad \mu=\frac{1}{2\alpha'} (d'-a' \bar{c}).
\]
Therefore,
\begin{equation}\label{1-kappa}
1-\kappa=\frac{1}{16 (\alpha ')^2} \left((a')^2\bar{c}^2 -2(\alpha'-(b')^2)\bar{c}+(a'+c')^2\right).
\end{equation}

If we now deform these structures as in \eqref{deform-1} and \eqref{deform-2}, we obtain two new paracontact metric $(\kappa,\mu)$-structures on $T_1 M$, which we will denote by $(\w{\varphi}_1,\xi',\eta',\w{G}_1)$ and $(\w{\varphi}_2,\xi',\eta',\w{G}_2)$. We will now prove that these new structures are indeed $g$-natural paracontact metric structures, by checking that they satisfy the conditions \eqref{xi}-\eqref{para-cond} given in Theorem~\ref{th-paracontact}.

Indeed, let us suppose that $(\w{\varphi}_1,\xi',\eta',\w{G}_1)$ is a $g$-natural paracontact metric structure for some real constants $a_1,b_1,c_1,d_1$. Then, formulas \eqref{g-tilde2} and \eqref{deform-1} respectively give
\begin{align*}
\w{G}_1(u^h,u^h)=a_1+c_1+d_1,\quad
\w{G}_1(u^h,u^h)=\frac{1}{\sqrt{1-\kappa}}  \w{G}(u^h,\w{h} u^h)+\w{\eta}(u^h)\w{\eta}(u^h)=\frac{1}{\rho^2}.
\end{align*}
Hence,
\begin{equation}\label{deform-cond-1}
a_1+c_1+d_1=\frac{1}{\rho^2}=\frac{1}{(\rho ')^2}=a'+c'+ d'.
\end{equation}

Formula \eqref{g-tilde2}, applied on the pairs $(u^h,X^h)$ and $(u^h,Y^{t_G})$, with $X,Y$ orthogonal to $u$, does not give any extra conditions. We will now consider $\w{G}_1(X_1^h,X_2^h)$, with with $X_1$ and $X_2$ orthogonal to $u$. Then, from \eqref{g-tilde2} and \eqref{deform-1} we respectively find
\begin{align*}
&\w{G}_1(X_1^h,X_2^h)=(a_1+c_1) \langle X_1,X_2 \rangle,\\
&\w{G}_1(X_1^h,X_2^h)=-\frac{1}{4\alpha' \sqrt{1-\kappa}} \left(\alpha'-(b')^2\right)\left(\bar{c}-(a'+c')^2\right) \langle X_1,X_2 \rangle.
\end{align*}
So,
\[
a_1+c_1=-\frac{1}{4\alpha' \sqrt{1-\kappa}} \left((\alpha'-(b')^2)\bar{c}-(a'+c')^2\right)
\]
and \eqref{deform-cond-1} yields
\begin{equation}\label{deform-d1}
d_1=a'+c'+d'-(a_1+c_1)=(a'+c'+d')+\frac{1}{4\alpha '\sqrt{1-\kappa}} \left((\alpha'- (b')^2)\bar{c}-( a'+c')^2\right).
\end{equation}

By a similar argument, we obtain
\begin{align*}
\w{G}_1 (X^h,Y^{t_G}) &=b_1 \langle X,Y \rangle=\frac{b'}{4\alpha' \sqrt{1-\kappa}} (a' \bar{c}+ a'+c') \langle X,Y \rangle,\\
\w{G}_1 (Y_1^{t_G},Y_2^{t_G}) &=a_1 \langle Y_1,Y_2 \rangle=\frac{1}{4\alpha' \sqrt{1-\kappa}} ((a')^2\bar{c}-\alpha'+(b')^2) \langle Y_1,Y_2 \rangle,
\end{align*}
for $X,Y,Y_1,Y_2$ orthogonal to $u$. Thus, we get
\begin{equation}\label{deform-a1-b1}
a_1 =\frac{1}{4 \alpha' \sqrt{1-\kappa}}((a')^2\bar{c}-\alpha'+(b')^2), \quad
b_1 =\frac{1}{4 \alpha' \sqrt{1-\kappa}}( a'\bar{c}+a'+c')
\end{equation}
and
\begin{equation}\label{deform-c1}
c_1=(a_1+c_1)-a_1=\frac{1}{4\alpha' \sqrt{1-\kappa}}[(-(a')^2-\alpha'+(b')^2)\bar{c}+\alpha'-(b')^2+(a'+c')^2].
\end{equation}

Substituting the values of $a_1, b_1, c_1$ from \eqref{deform-a1-b1}, \eqref{deform-c1} and \eqref{deform-d1} in the definition of $\alpha_1$ and using \eqref{1-kappa}, we find
\begin{align*}
\alpha_1 &=a_1(a_1+c_1)-b_1^2=-\frac{1}{16\alpha'(1-\kappa)} \left((a')^2 \bar{c}^2-2(\alpha'-(b')^2)\bar{c}+(a'+c')^2\right)=-\alpha'<0.
\end{align*}
In particular, this implies
\[
-4\alpha_1=4\alpha'=\frac{1}{\rho^2}=a'+c'+d'=a_1+c_1+d_1,
\]
so \eqref{para-cond} holds. Finally, it is easy to check that $\w{\varphi}_1$ satisfies \eqref{phi}. Therefore, $(\w{\varphi}_1,\xi',\eta',\w{G}_1)$ is a $g$-natural paracontact metric structure.

\medskip

The proof of $(\w{\varphi}_2,\xi',\eta',\w{G}_2)$ being a $g$-natural paracontact metric structure for constants $a_2$, $b_2$, $c_2$ and $d_2$ is similar to the previous case. Explicitly, we obtain
\begin{align*}
a_2 &=-\frac{\rho}{\sqrt{1-\kappa}} b',                &  b_2 &=-\frac{\rho}{2\sqrt{1-\kappa}} (a'\bar{c}-(a'+c')), \\
 c_2 &=-\frac{\rho}{\sqrt{1-\kappa}}(1+\bar{c})b',     &  d_2 &=\frac{1}{\rho^2}-\frac{\rho}{\sqrt{1-\kappa}}b'\bar{c}.
\end{align*}
Thus, taking into account \eqref{1-kappa}, we find $\alpha_2 =-\alpha'<0$ and
\[
\frac{1}{\rho^2}=a_2+c_2+d_2=-4\alpha_2.
\]
So, all the conditions of Theorem~\ref{th-paracontact} are satisfied. In this way, we proved the following.

\begin{Th}
Let $(M,\langle,\rangle)$ denote a manifold of constant sectional curvature. Then, given a non-Sasakian $g$-natural contact metric structure $({\varphi}', {\xi}',{\eta}',{G}')$ on $T_1 M$ (which is indeed a contact $(\kappa,\mu)$-space),
the canonical paracontact metric structures $(\w{\varphi}_i,\xi',\eta',\w{G}_i)$ described by \eqref{deform-1}, \eqref{deform-2} are $g$-natural paracontact metric $(\kappa,\mu)$-spaces.
\end{Th}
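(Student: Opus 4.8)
The plan is to recognise that the statement bundles together two distinct ingredients. The fact that each deformation \eqref{deform-1}, \eqref{deform-2} sends a non-Sasakian contact $(\kappa,\mu)$-space to a paracontact metric $(\kappa_i,\mu_i)$-space is already supplied by the construction of \cite{C} (and the non-Sasakian hypothesis guarantees $\kappa<1$, so $\sqrt{1-\kappa}$ is real and the deformations are well defined). Hence the only substantive task is to prove that the resulting structures $(\w\varphi_i,\xi',\eta',\w{G}_i)$ lie in the family characterised by Theorem~\ref{th-paracontact}; that is, to exhibit real constants $a_i,b_i,c_i,d_i$ for which $\w{G}_i$ has the form \eqref{g-tilde2}, $\w\varphi_i$ has the form \eqref{phi}, and the compatibility relation \eqref{para-cond} holds.

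First I would make the tensor $h'$ of the underlying $g$-natural contact metric structure explicit. Since the base manifold has constant sectional curvature $\bar c$, the Jacobi operator satisfies $R_uX=\bar c\,X$ for every $X\perp u$, so the known description of $h'$ for $g$-natural contact metric structures (obtained in \cite{AC1},\cite{AC2}, formally analogous to \eqref{h-explicit} but with $\alpha'$ of the opposite sign) reduces to a \emph{constant}-coefficient combination of the lifts $X^h$ and $X^{t_G}$. Substituting this reduced $h'$ into the defining formulas \eqref{deform-1} and \eqref{deform-2} turns each deformed metric $\w{G}_i$ and each tensor $\w\varphi_i$ into explicit bilinear, respectively linear, expressions in the lifts.

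Next I would read off the four constants by testing $\w{G}_i$ against the three pairing types in \eqref{g-tilde2}: evaluation on $(u^h,u^h)$ yields $a_i+c_i+d_i$; on $(X_1^h,X_2^h)$ with $X_j\perp u$ yields $a_i+c_i$; on $(X^h,Y^{t_G})$ yields $b_i$; and on $(Y_1^{t_G},Y_2^{t_G})$ yields $a_i$. This produces the explicit values recorded in \eqref{deform-a1-b1}--\eqref{deform-c1} for the first structure and the corresponding ones for the second. The decisive check is then to form $\alpha_i=a_i(a_i+c_i)-b_i^2$ and verify, using the identity \eqref{1-kappa}, that the quadratic expression $(a')^2\bar c^2-2(\alpha'-(b')^2)\bar c+(a'+c')^2$ collapses to $16(\alpha')^2(1-\kappa)$, whence $\alpha_i=-\alpha'<0$ and therefore $a_i+c_i+d_i=4\alpha'=-4\alpha_i=1/\rho^2$, which is exactly \eqref{para-cond}. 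A final direct comparison confirming that $\w\varphi_i$ takes the prescribed shape \eqref{phi} then completes the identification, and $g$-naturality follows.

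I expect the main obstacle to be purely computational rather than conceptual: carrying the parameters $a',b',c',d'$ and $\bar c$ through the products $\varphi'h'$ and the two quadratic forms $d\eta'(\cdot,\varphi'h'\cdot)$ and $d\eta'(\cdot,h'\cdot)$ without sign or bookkeeping errors, and recognising at the end the collapse of the numerator via \eqref{1-kappa}. The cleanest route is to exploit the constant-curvature reduction $R_u=\bar c\,\mathrm{id}$ as early as possible, so that every intermediate term is already a scalar multiple of a single lift and the matching against \eqref{g-tilde2} and \eqref{phi} becomes a coefficient comparison.
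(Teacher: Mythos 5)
Your proposal is correct and follows essentially the same route as the paper: the $(\kappa_i,\mu_i)$ property is imported from \cite{C}, the constants $a_i,b_i,c_i,d_i$ are read off by evaluating $\w{G}_i$ on the pairings $(u^h,u^h)$, $(X_1^h,X_2^h)$, $(X^h,Y^{t_G})$, $(Y_1^{t_G},Y_2^{t_G})$ using the constant-curvature reduction of $h'$, and the decisive step is the collapse $\alpha_i=-\alpha'$ via \eqref{1-kappa}, which yields \eqref{para-cond}. No gaps; this matches the paper's argument step for step.
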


We explicitly remark that for the canonical paracontact metric structures $(\w{\varphi}_i,\xi',\eta',\w{G}_i)$, constructed from a non-Sasakian $g$-natural $(\kappa,\mu)$-space $({\varphi}', {\xi}',{\eta}',{G}')$ on $T_1 M$, all cases are possible with regard to the values of $a_i$ (consequently, of $\phi_i$). In particular, the above formulas yield at once that
\begin{itemize}
\item $a_1=0$ if and only if $\bar c= \frac{a'(a'+c')-2(b')^2}{(a')^2}$;
\vspace{1mm}\item $a_2=0$ if and only if $b'=0$.
\end{itemize}

\section{Homogeneity and harmonicity properties}\label{sec-final}

Similarly to the contact metric case, a paracontact metric manifold $(\bar M, \bar\eta, \bar g)$ is said to be \emph{(locally) homogeneous paracontact} if it admits a transitive (pseudo-)group of (local) isometries leaving invariant its contact form $\bar \eta$ (and hence, the whole paracontact metric structure) \cite{C1}.

As proved in \cite{KS}, the tangent sphere bundle $T_r M$ of any radius $r>0$ of a two-point homogeneous space, equipped with any Riemannian $g$-natural metric, is homogeneous. This result was proved in \cite{KS} for Riemannian $g$-natural metrics, but the argument used does not need the metric to be positive definite. So, the same result is true for pseudo-Riemannian $g$-natural metrics as well. We will now prove the following result.

\begin{Th}\label{homparacont}
Let $(M,\langle,\rangle)$ be (locally isometric to) a two-point homogeneous space. Then, any $g$-natural paracontact metric structure $(\widetilde \varphi , \widetilde \xi, \widetilde \eta, \widetilde G)$ on $T_1M$  is (locally) homogeneous paracontact.
\end{Th}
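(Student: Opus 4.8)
The plan is to exhibit, for an arbitrary $g$-natural paracontact metric structure $(\widetilde\varphi,\widetilde\xi,\widetilde\eta,\widetilde G)$, a transitive (pseudo-)group of (local) isometries of $(T_1M,\widetilde G)$ each of whose elements leaves $\widetilde\eta$ invariant. By the definition of (local) homogeneous paracontact recalled just before the statement, this is exactly what has to be produced. The natural candidate is the group obtained by lifting to $T_1M$ the (local) isometries of the base $(M,\langle,\rangle)$, so the proof splits into two tasks: transitivity (which is where the hypothesis is used) and invariance of the full paracontact structure under the lifts.

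First I would recall that any (local) isometry $f$ of $(M,\langle,\rangle)$ induces a tangent map $Tf\colon TM\to TM$ which, since $f$ preserves $\langle,\rangle$, restricts to a diffeomorphism of $T_1M$. Because the ingredients out of which a $g$-natural metric is built — namely $\langle,\rangle$, the Levi-Civita connection $\nabla$, and the induced splitting into horizontal and vertical subspaces — are all preserved by $Tf$, the lift $Tf$ is an isometry of $(T_1M,\widetilde G)$. This is precisely the mechanism invoked from \cite{KS}: for a two-point homogeneous base these lifts already form a \emph{transitive} group of isometries of $(T_1M,\widetilde G)$, and, as remarked in the paragraph preceding the statement, the argument is insensitive to the signature and hence applies to the pseudo-Riemannian $\widetilde G$ considered here.

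The genuinely new point is that such a lift preserves the whole paracontact structure and not merely $\widetilde G$. Here I would use that $\widetilde\xi=\rho u^h$ is, up to the positive constant $\rho$, the geodesic flow vector field, together with the naturality relation $Tf\bigl(u^h_{(x,u)}\bigr)=(f_*u)^h_{(f(x),f_*u)}$, which follows from $f$ preserving geodesics (equivalently $\nabla$). Thus $\widetilde\xi$ is $Tf$-invariant; its $\widetilde G$-dual $\widetilde\eta=\widetilde G(\cdot,\widetilde\xi)$ is then invariant, and so is $d\widetilde\eta$, whence the tensor $\widetilde\varphi$ characterised by $\widetilde G(\cdot,\widetilde\varphi\cdot)=d\widetilde\eta$ is invariant as well. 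Therefore every lift leaves $(\widetilde\varphi,\widetilde\xi,\widetilde\eta,\widetilde G)$ invariant, and combining this with the transitivity from \cite{KS} yields the assertion.

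The step I expect to demand the most care is the transitivity, which is exactly where two-point homogeneity is essential: one needs that for any two unit tangent vectors there is a (local) isometry of $M$ carrying one to the other, i.e.\ that the isotropy representation acts transitively on unit vectors. For the ``locally isometric'' version everything must be phrased in terms of the pseudo-group of local isometries and local transitivity, but this introduces no new analytic difficulty. By contrast, once the naturality of $u^h$ is set up, the invariance of $\widetilde\eta$ (hence of the whole structure) under the lifts is a routine verification.
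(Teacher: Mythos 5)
Your proposal is correct and follows essentially the same route as the paper: lift the (local) isometries of the base to $T_1M$, invoke \cite{KS} for transitivity and for the lifts being isometries of $(T_1M,\widetilde G)$ (noting the argument is signature-independent), and deduce invariance of $\widetilde\eta$ and $\widetilde\varphi$ from the invariance of $\widetilde\xi$ and $\widetilde G$. The only cosmetic difference is that the paper verifies $\Psi_{*}\widetilde\xi=\widetilde\xi$ by an explicit computation with geodesics of $M$, whereas you quote the equivalent naturality relation for horizontal lifts.
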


\begin{proof} Consider any $g$-natural paracontact metric structure $(\widetilde \varphi ,\widetilde \xi,\widetilde  \eta ,\widetilde G)$ on $T_1M$. As $(M,\langle,\rangle)$ is a two-point homogeneous space, $(T_1 M,\widetilde G)$ is homogeneous \cite{KS}. More precisely, following the argument used in \cite{KS}, any (local) isometry $\psi$ of $(M,\langle,\rangle)$ can be lifted to a (local) isometry $\Psi$ of $(T_1M,\widetilde G)$, defined by
\[\Psi (z)=\Psi (x,u)=(\psi (x),\psi _* u),\]
for any unit tangent vector $z=(x,u) \in T_1 M$.

Let $\gamma$ denote the unique geodesic of $(M,\langle,\rangle)$, such that $\gamma (0)=x$ and $\dot{\gamma} (0)=u$. Then, we have $\widetilde \xi_{z}= \rho u^h= \rho \dot{\widetilde{\gamma}} (0)$,
where we put $\widetilde{\gamma} (t):=(\gamma (t), \dot{\gamma} (t))$. Hence,
\begin{equation}\label{fixi}
\Psi _{*z} \widetilde \xi _z = \rho \Psi _{*z} \dot{\widetilde{\gamma}} (0)=\rho \dot{\overbrace{\Psi \circ \widetilde \gamma}} (0).
\end{equation}
Since $\gamma$ and $\psi$  are respectively a geodesic and a local isometry of $(M,\langle,\rangle)$, the curve $\alpha(t):=\psi(\gamma(t))$ is again a geodesic of $(M,\langle,\rangle)$ and, by \eqref{fixi}, the curve
\[
\widetilde{\alpha} (t):=(\Psi \circ \widetilde{\gamma})(t) =(\psi(\gamma (t)),\psi_* \dot{\gamma}(t))
\]
satisfies
\[
\widetilde{\alpha} (0)=\Psi (z), \qquad \dot{\widetilde{\alpha}} (0)=\frac{1}{\rho} \Psi _{*z}\widetilde \xi _{z}.
\]
Hence,
\[
\widetilde \xi _{\psi(z)}=\Psi _{*z} \widetilde \xi _z,
\]
that is, $\widetilde \xi$ is invariant under the isometries of the form $\Psi$, which acts transitively on $(T_1 M,\widetilde G)$. Because of the definitions of tensors $\eta$, $\varphi$ and the paracontact metric condition $\Phi=d\eta$, the invariance of both $\widetilde G$ and $\widetilde \xi$ implies at once that $\Psi$ also leaves invariant $\widetilde \eta$ and $\widetilde \varphi$. Therefore, $(\widetilde \varphi, \widetilde\xi, \widetilde \eta,\widetilde G)$ is a homogeneous paracontact metric structure.
\end{proof}

Taking into account the results of Sections~3 and 4, the above Theorem~\ref{homparacont} yields at once the following result.

\begin{Cor}
$g$-natural paraSasakian structures, $g$-natural paracontact metric structures satisfying $\widetilde h^2=0\neq \widetilde h$ and $g$-natural paracontact $(\kappa,\mu)$-spaces, as classified in Theorems~\emph{\ref{th-paracontact}, \ref{h2=0}, \ref{th-kappamu}}, provide examples of (locally) homogeneous paracontact metric manifolds of arbitrary odd dimension.
\end{Cor}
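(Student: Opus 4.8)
The plan is to reduce the statement to Theorem~\ref{homparacont} by verifying that, in each of the three classifications, the base manifold $(M,\langle,\rangle)$ is automatically two-point homogeneous. First I would record the base geometry imposed by each family. By Theorem~\ref{th-parasasakian}, a $g$-natural paraSasakian structure (with $a\neq0$) forces $(M,\langle,\rangle)$ to have constant negative sectional curvature $\bar c=(a+c)/a$. By Theorem~\ref{h2=0}, the condition $\widetilde h^2=0\neq\widetilde h$ forces $(M,\langle,\rangle)$ to be locally isometric to a non-compact rank-one symmetric space of non-constant sectional curvature. Finally, by Theorem~\ref{th-kappamu}, a $g$-natural paracontact $(\kappa,\mu)$-space with $a\neq0$ has base of constant sectional curvature $\bar c$. (The existence of such structures on these bases is itself guaranteed by Theorem~\ref{th-paracontact} and the corollaries following Theorems~\ref{th-parasasakian} and \ref{h2=0}.)

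Next I would invoke the classification of two-point homogeneous spaces already recalled in Section~\ref{sec-para}: such a space is precisely a flat space or a rank-one symmetric space. In particular, every manifold of constant sectional curvature and every rank-one symmetric space (compact or not) is two-point homogeneous. Hence, in all three families the base $(M,\langle,\rangle)$ is (locally isometric to) a two-point homogeneous space, and Theorem~\ref{homparacont} applies verbatim. This yields at once that the corresponding $g$-natural paracontact metric structure on $T_1M$ is (locally) homogeneous paracontact, settling the homogeneity assertion for the three families simultaneously.

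It remains to justify the clause \emph{arbitrary odd dimension}. Here I would use that $\dim T_1M=2n+1$ whenever $\dim M=n+1$, together with the fact that real hyperbolic spaces $\mathbb R\mathbb H^{n+1}$, of constant negative sectional curvature, exist for every $n+1\geq2$. Thus the paraSasakian family (equivalently, the constant-curvature $(\kappa,\mu)$ family) already realises every odd dimension $2n+1\geq3$, which establishes the final clause.

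The argument is essentially a bookkeeping reduction to Theorem~\ref{homparacont}, so I do not expect a genuine obstacle. The only delicate point is the realisability of \emph{all} odd dimensions: the $\widetilde h^2=0$ family is tied to the fixed dimensions of the complex, quaternionic and Cayley hyperbolic spaces, so the ``arbitrary odd dimension'' conclusion has to be drawn from the constant-curvature families rather than from the $\widetilde h^2=0$ one.
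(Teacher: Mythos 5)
Your proposal is correct and follows essentially the same route as the paper, which simply observes that the classifications in Sections~3 and 4 force the base manifold to be of constant curvature or a rank-one symmetric space (hence two-point homogeneous), so Theorem~\ref{homparacont} applies at once. Your extra care about which family realises arbitrary odd dimensions is a sensible refinement the paper leaves implicit.
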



As recently proved by the first author and D. Perrone \cite{CP3}, a paracontact metric manifold $(M,\varphi,\xi,\eta,g)$ is \emph{$H$-paracontact} (that is, its characteristic vector field $\xi$ is harmonic) if and only if $\xi$ is a Ricci eigenvector. In particular, this characterisation implies that $K$-paracontact and paracontact $(\kappa,\mu)$-spaces are $H$-paracontact.

Therefore, \emph{the $g$-natural paracontact metric structures studied in Theorems}~\ref{th-parasasakian}, \ref{th-kappamu} \emph{give some large classes of examples of $H$-paracontact metric manifolds}. We will come back in a forthcoming paper to the study of the harmonicity properties of $g$-natural paracontact metric structures.


\end{document}